\newcolumntype{L}{>{\RaggedRight\arraybackslash}X}
\newtheorem*{ThmA}{Main Theorem}
\newtheorem{thm}{Theorem}[section]
\newtheorem{lemma}[thm]{Lemma}
\newtheorem{propn}[thm]{Proposition}
\theoremstyle{definition}
\newtheorem{defn}[thm]{Definition}
\newtheorem{example}[thm]{Example}
\theoremstyle{remark}
\newtheorem{remark}[thm]{Remark}
\newcommand{\R}{\mathbb{R}}
\newcommand{\Z}{\mathbb{Z}}
\newcommand{\g}{\mathfrak{g}}
\newcommand{\Ad}{\operatorname{Ad}}
\newcommand{\ad}{\operatorname{ad}}
\title{An Explicit Construction of $\mathbb{S}^1$-Gerbes over the Stack $[G/G]$}
\author{Dadi Ni}
\address{School of Mathematics and Statistics, Henan University, China} 
\email{\href{mailto:nidd@henu.edu.cn}{nidd@henu.edu.cn}}
\author{Kaichuan Qi}
\address{Department of Mathematics, Penn State University, USA} 
\email{\href{mailto:kaichuan@psu.edu}{kaichuan@psu.edu}}
\thanks{The first author is  supported by the Natural Science Foundation of Henan Province
 (No. 252300421766). The second author's research is partially supported by the National Science Foundation (award DMS-2302447).}
\begin{document}

\maketitle

\begin{abstract}
For a compact and connected Lie group $G$, we present an explicit construction of an $\mathbb{S}^1$-gerbe over the differentiable stack $[G/G]$ in the framework of $\mathbb{S}^1$-central extensions of Lie groupoids. This gives a complete proof of the construction outlined earlier by Behrend--Xu--Zhang, together with an explicit proof of the differential-form identity stated there without proof. In particular, when $G$ is compact, simple, and simply connected, the Dixmier--Douady class of the resulting gerbe is the canonical generator of ${\rm H}^3_G(G,\mathbb Z)$.
\end{abstract}

\begin{itemize}
	\item 
	{\it Keywords:}  $\mathbb{S}^1$-gerbe, differentiable stack, symplectic groupoid, loop group, Morita equivalence. 
	\item 
	{\it AMS subject classification: 	53D17, 	53D20, 22E67.}
\end{itemize}

\tableofcontents

\section{Introduction}
Gerbes are higher geometric analogues of line bundles, designed to encode
topological information in degree three. More precisely, just as complex line
bundles on a manifold $M$ are classified by classes in
${\rm H}^2(M,\mathbb Z)$, $\mathbb S^1$-gerbes give differential geometric
representatives for classes in ${\rm H}^3(M,\mathbb Z)$, their fundamental
invariant being the Dixmier--Douady class. Gerbes were originally introduced
by Giraud \cite{Giraud} as certain sheaves of groupoids, and later became
central in differential geometry and mathematical physics through the work of
Hitchin \cites{Hit01,Hitchin_BundleGerbes} and Brylinski \cite{Brylinski}.
Among concrete geometric models, bundle gerbes, introduced by Murray
\cite{Murray_BundleGerbes}, provide a particularly useful differential
geometric realization of degree-three integral cohomology classes. We refer to
\cite{Bunk1} for recent developments and applications of bundle gerbes in
geometry and physics.

Gerbes also occur naturally in equivariant geometry, Wess--Zumino--Witten
models, string theory, and twisted $K$-theory
\cites{Gawedzki-Reis,Meinrenken,Bunk2}. Related constructions have also been
studied for orbifolds \cite{MR2045679}. Motivated in part by these examples,
Behrend and Xu developed a framework for $\mathbb S^1$-gerbes over
differentiable stacks, identifying them with $\mathbb S^1$-central extensions
of Lie groupoids up to Morita equivalence \cite{stacks_B-X}. This point of
view is especially well suited to quotient stacks and equivariant situations,
where stack-theoretic cohomology retains the symmetry information lost by
passing to an ordinary quotient space.

A particularly important class of examples arises from the differentiable
stack $[G/G]$, where $G$ acts on itself by conjugation. Geometrically,
$[G/G]$ may be regarded as a refined quotient which encodes the conjugation
symmetry of $G$ in a way that remains well behaved even when the ordinary
quotient space is singular. In particular, the cohomology of $[G/G]$ is
naturally identified with the equivariant cohomology
${\rm H}^{\bullet}_G(G,\mathbb Z)$.
Within this framework, the Cartan 3-form on 
$G$, constructed from an invariant bilinear form on the Lie algebra, defines a canonical equivariant cohomology class, which serves as a fundamental example of a degree-three invariant associated to 
$G$. A central goal is to realize this canonical equivariant degree-three class as the Dixmier–Douady class of an 
$\mathbb{S}^1$-gerbe over $[G/G]$, commonly referred to as the basic gerbe.

The problem of constructing the basic equivariant $\mathbb{S}^1$-gerbe over $G$ for the conjugation action has been studied from several viewpoints. One classical line of approach uses local Čech-type or bundle gerbe models. In this direction, Gawedzki and Reis constructed gerbes associated to the WZW model in the special case of $G = SU(N)$ \cite{Gawedzki-Reis}. More generally, Meinrenken \cite{Meinrenken}  constructed the basic
equivariant gerbe for any compact, simple, simply connected Lie group 
$G$, and see \cite{Gawedzki-Reis-nonsc} for further development.

A different, global framework is developed by Behrend and Xu, who showed that $\mathbb{S}^1$-gerbes over differentiable stacks can be described as $\mathbb{S}^1$-central extensions of Lie groupoids, up to Morita equivalence \cite{stacks_B-X}. Under this framework, Behrend, Xu, and Zhang (BXZ) outlined a construction of the basic gerbe over $[G/G]$ by groupoid methods \cite{B-X-Z}. On the other hand, Xu's work on quasi-symplectic groupoids and Morita equivalence provides the groupoid-theoretic background for the comparison between loop group and group valued moment map pictures used in this paper. Our goal here is to return to the BXZ construction and supply a complete proof in the groupoid-central-extension framework, together with an explicit geometric realization and  comparison at the level of differential forms.

The fundamental topological invariant of an $\mathbb{S}^1$-gerbe is its Dixmier--Douady class, a degree-three integral cohomology class on the underlying differentiable stack, which may be described using suitable connection and curving data \cites{stacks_B-X,Murray_BundleGerbes,Hitchin_BundleGerbes}. In this paper, we construct an $\mathbb{S}^1$-gerbe over 
$[G/G]$ whose Dixmier--Douady class coincides with the canonical equivariant class in ${\rm H}^3_G(G,\mathbb{Z})$.

\begin{ThmA}
Let $G$ be a compact, simple, and simply connected Lie group, and let $\alpha \in {\rm H}^3_G(G,\mathbb{Z})$ be the canonical generator. There exists an 
$\mathbb{S}^1$-central extension of Lie groupoids representing an 
$\mathbb{S}^1$-gerbe over the differentiable stack 
$[G/G]$, whose Dixmier-Douady class is $\alpha$.
\end{ThmA}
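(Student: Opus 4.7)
The plan is to apply the Behrend–Xu correspondence \cite{stacks_B-X}, under which an $\mathbb{S}^1$-gerbe over the differentiable stack $[G/G]$ is represented by an $\mathbb{S}^1$-central extension $\mathbb{S}^1 \to \tG \to \mathcal{G}$ of a Lie groupoid $\mathcal{G}$ that is Morita equivalent to the transformation groupoid $G \ltimes G \rightrightarrows G$ for the conjugation action. Since the central extension naturally carrying the basic class is infinite-dimensional (the Kac–Moody extension of a loop group), the strategy is to replace $G \ltimes G$ by a Morita equivalent groupoid built from the based path space of $G$, construct the desired $\tG$ in that model, and only then translate back to obtain a de Rham representative of the Dixmier–Douady class.

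Concretely, I would consider the based path space $PG = \{\gamma \in C^\infty([0,1],G) : \gamma(0) = e\}$, with evaluation $\mathrm{ev}_1:PG \to G$ exhibiting $PG$ as a principal $\Omega G$-bundle. The group $G$ acts on $PG$ by pointwise conjugation and commutes with the right $\Omega G$-action, so the combined action defines a Lie groupoid
\[
\mathcal{G} := (G \times \Omega G) \ltimes PG \rightrightarrows PG,
\]
whose orbit stack is $[G/G]$ and which is Morita equivalent to $G \ltimes G$ along $\mathrm{ev}_1$. The $\mathbb{S}^1$-central extension would then be
\[
\tG := (G \times \widehat{\Omega G}) \ltimes PG \rightrightarrows PG,
\]
where $\widehat{\Omega G}$ is the basic Kac–Moody $\mathbb{S}^1$-central extension of $\Omega G$, available because $G$ is compact, simple, and simply connected. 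That the two semidirect factors assemble into a well-defined groupoid extension reduces to verifying that the conjugation $G$-action on $\Omega G$ lifts to $\widehat{\Omega G}$, which follows from $\Ad$-invariance of the basic bilinear form on $\g$ defining the Kac–Moody cocycle.

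To extract the Dixmier–Douady class I would follow the Behrend–Xu prescription: choose a connection on the $\mathbb{S}^1$-bundle $\tG \to \mathcal{G}$, extract its curving on $PG$ and its 3-curvature, and translate the resulting cocycle into an equivariant closed 3-form through the Bott–Shulman–Getzler total complex and the Cartan model of ${\rm H}^\bullet_G(G)$. The natural candidates are the standard left-invariant connection on $\widehat{\Omega G}$ combined with the transgression 2-form on $PG$ obtained by integrating the Maurer–Cartan form along each path. The main obstacle I anticipate is precisely this final identification: showing that the resulting equivariant closed 3-form agrees with the Alekseev–Malkin–Meinrenken representative of $\alpha$ requires a careful computation of the $\Omega G$-transgression together with the equivariant extension terms coming from the infinitesimal $G$-action on $PG$. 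Integrality follows automatically from the $\mathbb{S}^1$-central extension structure, so the class lands in ${\rm H}^3_G(G,\mathbb{Z})$, and normalization by the basic inner product then produces the canonical generator.
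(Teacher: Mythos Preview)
Your construction is essentially the paper's, in different clothing. The holonomy map $A\mapsto(s\mapsto\mathrm{Hol}_s(A))$ is a diffeomorphism $L\mathfrak g^*\xrightarrow{\ \sim\ }PG$, under which the gauge action of $LG$ on $L\mathfrak g^*$ becomes exactly your $(G\times\Omega G)$-action on $PG$ (constant loops act by conjugation, based loops by right translation). Since $LG\cong G\ltimes\Omega G$ for simply connected $G$, your groupoid $(G\times\Omega G)\ltimes PG$ is isomorphic to the paper's $LG\ltimes L\mathfrak g^*$, and your extension $(G\times\widehat{\Omega G})\ltimes PG$ is the paper's $\widetilde{LG}\times L\mathfrak g^*$ (the Kac--Moody extension restricted to $\Omega G$ together with the splitting over constant loops is precisely the lift of the $G$-conjugation action you mention).

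The real difference is in how the Dixmier--Douady class is identified. The paper does not attempt your direct transgression computation; instead it recognizes $LG\times L\mathfrak g^*$ as a \emph{symplectic groupoid}, obtains $\omega_{LG\times L\mathfrak g^*}$ as the pseudo-curvature via symplectic reduction of $T^*\widetilde{LG}$, and then proves (Theorem~B) that this symplectic groupoid is Morita equivalent as a \emph{quasi-symplectic} groupoid to the AMM groupoid $(G\times G,\omega_D+\Omega)$, whence $[\omega_{LG\times L\mathfrak g^*}]=[\omega_D+\Omega]$ in $\mathrm H^3_{\mathrm{dR}}([G/G])$. The ``error term'' $\varpi$ on $L\mathfrak g^*$ that mediates this comparison is exactly the transgression $2$-form you are reaching for, but the paper packages the computation through the AMM equivalence of Hamiltonian $LG$-spaces and q-Hamiltonian $G$-spaces rather than through a bare Cartan-model calculation. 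What this buys is that the ``main obstacle'' you flag becomes a structured argument (Lemmas on ${\rm inv}^*\varpi=-\varpi$, the moment-map identity, etc.) rather than an ad hoc form comparison; conversely, your framing has the advantage of not needing the symplectic-groupoid language at all, provided you can push the transgression computation through.
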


The assumptions on $G$ are used at different points in the argument. The differential-form comparison of Section 4 is established for compact connected $G$. The passage through the Kac–Moody $\mathbb{S}^1$-central extension of $LG$, and hence the construction of the central extension of Lie groupoids used in Section 5, requires the simply connected hypothesis in the present setting. The stronger assumption that $G$ be simple is used only in the final identification of the Dixmier–Douady class with the canonical generator of ${\rm H}^3_G(G,\mathbb Z)$.

The proof is based on symplectic reduction and the theory of symplectic groupoids. Starting from the Kac--Moody $\mathbb S^1$-central extension
$\widetilde{LG}\to LG$, we consider the trivialized cotangent bundle of
$\widetilde{LG}$, which naturally carries both a symplectic groupoid structure and a Hamiltonian
$\mathbb S^1$-action. Performing symplectic reduction at the regular value $1$ of the moment map yields a reduced symplectic groupoid with underlying space
$LG\times L\mathfrak g^*$. The corresponding level set, together with the reduced groupoid, gives rise to the desired $\mathbb S^1$-central extension of Lie groupoids, while the reduced symplectic form provides the connection and curving data for the associated gerbe.

To identify the resulting Dixmier--Douady class, we compare the reduced symplectic form on
$LG\times L\mathfrak g^*\rightrightarrows L\mathfrak g^*$ with the Alekseev--Malkin--Meinrenken (AMM) cocycle on the conjugation action groupoid
$G\times G\rightrightarrows G$. At the level of Hamiltonian spaces, this comparison goes back to the quasi-Hamiltonian framework of AMM, while Xu formulated the corresponding quasi-symplectic groupoid viewpoint. For connected Lie groups, Alekseev--Meinrenken's path-fibration and Atiyah-algebroid approach provides an invariant primitive underlying this comparison. In the present paper, we give a direct and explicit proof of the required form-level identity in the concrete model
$LG\times L\mathfrak g^*\rightrightarrows L\mathfrak g^*$. Under the assumption that $G$ is compact, simple, and simply connected, the AMM cocycle represents the canonical generator of ${\rm H}^3_G(G,\mathbb Z)$, and hence the Dixmier--Douady class of the gerbe constructed above coincides with $\alpha$.



\section{Preliminaries}\label{Sec:preliminary}
This section is an exposition of basic definitions and examples related to symplectic groupoids and the de Rham cohomology of Lie groupoids, following the conventions outlined in \cites{crainic2021lectures, mackenzie2005general}.

A groupoid is a small category in which every morphism admits an inverse. Concretely, it consists of two sets, $\mathcal{G}$ (the arrows) and $M$ (the objects), together with the following structure maps satisfying group-like axioms:
\begin{enumerate}
    \item Source and target maps 
    $\mathbf{s}, \mathbf{t}: \mathcal{G} \to M$, assigning to each arrow its domain and codomain;
    \item Unit map 
    $\epsilon: M \to \mathcal{G}$, embedding objects as identity arrows;
    \item Inverse map 
    $\iota: \mathcal{G} \to \mathcal{G}$, assigning to each arrow its inverse;
    \item Composition map 
    $\mathbf{m}: \mathcal{G}_2 \to \mathcal{G}$, where
    \begin{equation*}
        \mathcal{G}_2 := \left\{ (g, h) \in \mathcal{G} \times \mathcal{G} \;\middle|\; \mathbf{s}(g) = \mathbf{t}(h) \right\},
    \end{equation*}
    denotes the set of \textit{composable arrow pairs}.
\end{enumerate}
A groupoid $\mathcal{G}\rightrightarrows M$ is a \textbf{Lie groupoid} if $\mathcal{G}$ and $M$ are smooth manifolds, all structure maps are smooth, and $\mathbf{s}, \mathbf{t}$ are submersions.

We may consider cohomology of Lie groupoids. Let $\mathcal{G}_p$ denote the space of composable sequences of $p$-arrows. Then the face maps arising from the nerve $\mathcal{G}_{\bullet}$ form a simplicial manifold, which induces cohomology theories. In this paper, we will mainly work with two types of differentials: the groupoid differential
\[
\partial: \Omega^q(\mathcal{G}_{p})\rightarrow \Omega^q(\mathcal{G}_{p+1}),
\]
whose corresponding cohomology group ${\rm H}^{\bullet}(\mathcal{G},\Omega^q)$ is called the groupoid cohomology group with coefficients in $\Omega^q$; and further, the de Rham differential, 
\[
\delta:=\partial +(-1)^pd: \Omega^q(\mathcal{G}_p)\rightarrow \Omega^q(\mathcal{G}_{p+1})\oplus\Omega^{q+1}(\mathcal{G}_p),
\]
whose corresponding cohomology group is ${\rm H_{dR}^{\bullet}}(\mathcal{G})$. We remark that Lie groupoid cohomology with coefficients in other sheaves can also be defined, and an acyclic resolution is needed.

One important property of the de Rham cohomology is that it is invariant under Morita equivalence of Lie groupoids. Recall that a morphism of Lie groupoids $\phi:\mathcal{G}\rightarrow \mathcal{H}$ is called a Morita morphism if $\phi_0:\mathcal{G}_0\rightarrow \mathcal{H}_0$ is a surjective submersion, and the pullback Lie groupoid $\phi_0^!\mathcal{H}$ is isomorphic to $\mathcal{G}$. Two Lie groupoids $\mathcal{G}$ and $\mathcal{H}$ are called Morita equivalent if there exists a Lie groupoid $\mathcal{K}$ and Morita morphisms $\mathcal{K}$ to $\mathcal{G}$ and $\mathcal{K}$ to $\mathcal{H}$.

Following \cite{stacks_B-X}, we define a differentiable stack to be an equivalence class of Lie groupoids, up to Morita equivalence. We often use $[\mathcal{G}]$ for the differentiable stack represented by the Lie groupoid $\mathcal{G}$. Further, we take the corresponding de Rham cohomology as the differentiable stack cohomology.

An important class of Lie groupoids consists of the action groupoids arising from Lie group actions. Consider the conjugation action of a Lie group $G$ on itself, with corresponding action groupoid $G\times G\rightrightarrows G$. We denote the corresponding stack by $[G/G]$. The corresponding de Rham cohomology ${\rm H_{dR}^{\bullet}}([G/G])$ is identified with the equivariant cohomology ${\rm H_{G}^{\bullet}}(G)$.

We also need the notion of symplectic groupoids. Given $\omega\in \Omega^2(\mathcal{G})$, which is a symplectic form on a Lie groupoid $\mathcal{G}$, we say that $(\mathcal{G},\omega)$ is a symplectic groupoid if $\omega$ is multiplicative, i.e.
\[
\partial\omega:={\rm pr}_1^*\omega+{\rm pr}_2^*\omega-\mathbf{m}^*\omega = 0.
\]
Note that then $[\omega]$ also defines a $3$-class in ${\rm H_{dR}^3}([\mathcal{G}])$.

As an example, consider the coadjoint action groupoid $G\times \mathfrak{g}^*\rightrightarrows\mathfrak{g}^*$. We may identify $G\times \mathfrak{g}^*$ with the cotangent bundle $T^*G$ via left trivializations. Then $-\omega_{\rm can}$ makes $G\times \mathfrak{g}^*$ into a cotangent groupoid. Here we take the convention $\omega_{\rm can}=-d\theta_L$, where $\theta_L\in \Omega^1(T^*G)$ denotes the Liouville $1$-form.

A notable feature of  symplectic groupoids is that there is a unique Poisson structure on the base manifold such that the groupoid target map $\mathbf{t}$ is a Poisson map. For example, $(G\times \mathfrak{g}^*,-\omega_{\rm can})$ determines the Lie-Poisson structure on $\mathfrak{g}^*$.

Finally, we recall a family of Hamiltonian spaces. If $G$ acts on a manifold $M$, then the action of $G$ on $T^*M$, induced by the cotangent lift, is Hamiltonian with respect to $-\omega_{\rm can}$. Here, the moment map $\mu$ is induced by pairing with the Liouville $1$-form, and we adopt the convention $i_{\rho(v)}(-\omega_{\rm can}) = -d\mu^v$ for the moment map condition. Here $v\in \mathfrak{g}$ and $\rho: \mathfrak{g} \rightarrow \mathfrak{X}(M)$ is the infinitesimal action, given by
\[
\rho(v)|_m = \frac{d}{dt}\bigg|_{t=0}\exp(-tv)\cdot m,
\]
for all $v\in\mathfrak{g}$ and $m\in M$. Notice that if we consider $M=\mathfrak{g}^*$ with the coadjoint $G$-action, and identify $T^*G$ with $G\times \mathfrak{g}^*$ via left trivializations, then $(G\times \mathfrak{g}^*,-\omega_{\rm can})$ is a Hamiltonian $G$-space whose moment map coincides with the groupoid target map $\mathbf{t}: G\times \mathfrak{g}^*\rightarrow \mathfrak{g}^*$.

\section{Symplectic groupoids of  affine Poisson structures}\label{Sec:lie-algebra-case}

In this section, we recover the symplectic groupoid of the affine Poisson manifold $(\mathfrak g^*,\pi_\lambda)$ by two methods. The first gives an explicit realization as the affine action groupoid equipped with the canonical $2$-form with magnetic term. The second, while requiring the stronger assumption of a group central extension, is essential for our purposes since it naturally produces the $\mathbb S^1$-central extension of Lie groupoids used later in the gerbe construction. Symplectic groupoids of affine Poisson structures were studied earlier by Lu \cite{MR2685337} by a different method.

Let $G$ be a connected Lie group with Lie algebra  $\g$.
Let $\lambda \in \wedge^2 \mathfrak{g}^*$ be a Lie algebra 2-cocycle, i.e., $\lambda(u,[v,w])+ \lambda(v,[w,u]) + \lambda(w,[u,v]) = 0$, for any $u,v,w,\in \mathfrak{g}$. We denote by $\pi_{\lambda}$ the \textbf{affine Poisson structure} on $\g^*$ associated with $\lambda$, whose Poisson bracket is given by
\begin{equation}\label{Eqt:affine-lie-algebra}
	\{l_{v_1},l_{v_2}\} = l_{[v_1,v_2]}+\lambda(v_1,v_2),\quad \forall v_1,v_2\in \mathfrak{g}. 
\end{equation}
Here $l_{v_i}$ $(i=1,2)$ denotes the linear function on $\g^*$ defined by $l_{v_i}(\xi):=\langle \xi,v_i\rangle$ for all $\xi\in\g^*.$ 
We further define the map  $\lambda^{\flat}:\mathfrak{g} \rightarrow \mathfrak{g}^*$ by the relation $\langle \lambda^{\flat}(u), v\rangle=\lambda(u,v)$, for all $u,v\in \mathfrak{g}.$ It is obvious that $\lambda^{\flat}$ satisfies 
$$\ad^*_v \lambda^{\flat}(w)-\ad^*_w \lambda^{\flat}(v)=\lambda^{\flat}([v,w]),\quad \forall v,w\in\g.$$ 
This implies that  $\lambda^{\flat}$ is a 1-cocycle in the Chevalley-Eilenberg complex for the coadjoint representation $\g\to {\rm End}(\g^*)$, so its cohomology class $[\lambda^{\flat}]\in {\rm H}^1_{\ad^*}(\g,\g^*)$ is well-defined.

\subsection{First approach: action groupoid for the affine action}\label{Sec:first-construction}
\noindent

Let  $\Psi\colon {\rm H}_{\rm \Ad^*}^1(G,\g^*)\to {\rm H}_{\rm \ad^*}^1(\g,\g^*)$ denote the  Van Est morphism. On the cochain level, $\Psi$ is defined  by $$\Psi(f)(u)=\frac{d}{dt}|_{t=0} f(\exp(tu)),\quad f\in C^1(G,\g^*), u\in\g.$$ Suppose that the map $\lambda^{\flat}$ integrates to a $\g^*$-valued Lie group 1-cocycle $\chi: G\rightarrow \g^*$, that is, $\chi$ satisfies
\begin{equation}\label{Eqt:chi}
	\chi(gh)=\Ad^*_{g^{-1}} \chi(h)+\chi(g),\quad \forall g,h\in G,
\end{equation}
and  $\Psi([\chi])=[\lambda^{\flat}]$.
A key result in Van Est theory states that if 
$G$ is connected and simply connected, then the Van Est morphism  $\Psi$ is an isomorphism \cite{MR2016690}. In this case,  $\lambda^{\flat}$ always admits an integration.

Following the conventions outlined in \cite{marsdenbook}, the \textbf{affine action} of group $G$ on the dual space $\g^*$ associated with the 1-cocycle $\chi$  is defined as follows:
\begin{equation}\label{Eqt:affine-action}
	g\cdot \xi = \Ad^*_{g^{-1}}\xi -  \chi(g),
\end{equation}
for all $ g\in G$ and $\xi \in \mathfrak{g}^*$. 
Let $\Gamma:= G \times \g^*$ denote the transformation groupoid corresponding to the action given by Equation \eqref{Eqt:affine-action}.

Let $p_G: T^*G \rightarrow G$ be the bundle projection, and  $\lambda^L \in \Omega^2(G)$  the left-invariant 2-form associated to $\lambda$. Then the \textbf{canonical form with magnetic term} \cite{marsdenbook} is the 2-form $\omega_M\in \Omega^2(T^*G)$ given by
\begin{equation*}
	\omega_M = \omega_{\rm can}+p_G^*\lambda^L.
\end{equation*}

Let $\varphi: G\times \g^* \cong T^*G$ be the trivialization induced by left translations. Then  the 2-form $\omega_{\Gamma}: = -\varphi^*\omega_M \in \Omega^2(\Gamma)$  can be described by the following formula:
\begin{equation} \label{eq:O_g-def}
	\omega_{\Gamma}((v_1,\xi_1),(v_2,\xi_2))_{(g,\eta)} = \langle \xi_1, v_2 \rangle -  \langle \xi_2, v_1 \rangle - \langle \eta,[v_1,v_2]_{\mathfrak{g}}\rangle - \lambda(v_1,v_2),
\end{equation}
 for all $(v_i,\xi_i) \in \mathfrak{g}\times \mathfrak{g}^* \cong T_{(g,\eta)}\Gamma. $ The main result in this section is the following.

 \begin{thm}\label{thm: symplectic_groupoid}
 Let $G$ be a simply connected Lie group with Lie algebra $\mathfrak{g}$. Then $(\Gamma, \omega_\Gamma)$ is a symplectic groupoid integrating the affine Poisson structure $(\mathfrak{g}^*,\pi_{\lambda})$.    
 \end{thm}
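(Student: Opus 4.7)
The strategy is to verify the three defining conditions of a symplectic groupoid for $(\Gamma, \omega_\Gamma)$ directly from the explicit formula \eqref{eq:O_g-def}: that $\omega_\Gamma$ is closed, non-degenerate, multiplicative with respect to the affine-action groupoid structure, and that the Poisson structure it induces on $\mathfrak{g}^*$ coincides with $\pi_\lambda$.

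Closedness and non-degeneracy reduce to properties of $\omega_M = \omega_{\rm can} + p_G^*\lambda^L$ on $T^*G$ via the diffeomorphism $\varphi$. Closedness of $\omega_M$ is equivalent to $d\lambda^L = 0$, which is precisely the Chevalley--Eilenberg cocycle condition on $\lambda$. Non-degeneracy can be read directly off \eqref{eq:O_g-def}: a tangent vector $(v_1, \xi_1)$ annihilating every $(v_2, \xi_2)$ must have $\xi_1 = 0$ (choosing $v_2 = 0$) and then $v_1 = 0$ (choosing $\xi_2 = 0$), so the magnetic term does not disturb the non-degeneracy inherited from the Liouville pairing.

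The core step is multiplicativity. I would parametrize $\Gamma_2$ by $(g_1, g_2, \eta) \in G \times G \times \mathfrak{g}^*$ via $(g_1, g_2, \eta) \mapsto ((g_1, g_2\cdot\eta),(g_2, \eta))$ and check the identity ${\rm pr}_1^*\omega_\Gamma + {\rm pr}_2^*\omega_\Gamma - \mathbf{m}^*\omega_\Gamma = 0$ applied to a pair of compatible tangent vectors, unpacking both sides using \eqref{eq:O_g-def}. Splitting $\omega_\Gamma$ into the Lie--Poisson piece $-\varphi^*\omega_{\rm can}$ and the magnetic correction $-\varphi^* p_G^*\lambda^L$, the first summand is multiplicative for the coadjoint groupoid by Example \ref{example: symplectic-groupoid}; when promoted to the affine groupoid, the composability condition $\eta_1 = \Ad^*_{g_2^{-1}}\eta - \chi(g_2)$ produces an extra contribution of the form $\langle \chi(g_2), [\cdot,\cdot]\rangle$ from the $\langle \eta_1, [v_1, v_2]\rangle$ term. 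The magnetic summand in turn contributes terms governed by $\lambda^L$ at $g_1$, $g_2$, and $g_1 g_2$. The cancellation of these two discrepancies is exactly the content of the Lie group cocycle condition \eqref{Eqt:chi} for $\chi$ applied at the pair $(g_1, g_2)$, which at the infinitesimal level recovers the fact that $\lambda^\flat$ is a $(1,\Ad^*)$-cocycle.

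Finally, for the identification of the induced Poisson structure on $\mathfrak{g}^*$: a direct computation using \eqref{eq:O_g-def} along the identity bisection shows that $\{\mathbf{t}^* l_{v_1}, \mathbf{t}^* l_{v_2}\}_{\omega_\Gamma} = \mathbf{t}^*(l_{[v_1, v_2]} + \lambda(v_1, v_2))$, matching \eqref{Eqt:affine-lie-algebra}. The main obstacle is the multiplicativity check, where the interplay between the $\chi$-twist of the target map and the magnetic contribution requires careful bookkeeping; in particular the case $\chi = 0$ recovers the classical Lie--Poisson cotangent groupoid, so all new content is concentrated in this cancellation, and the simply-connectedness hypothesis on $G$ enters only through the Van Est isomorphism needed to guarantee the existence of $\chi$ integrating $\lambda^\flat$.
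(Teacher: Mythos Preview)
Your proposal is correct and follows essentially the same route as the paper: parametrize $\Gamma_2$ by $G\times G\times\mathfrak{g}^*$, compute the pushforwards of ${\rm pr}_1$, ${\rm pr}_2$, $\mathbf{m}$, and reduce the multiplicativity check to the identity $\langle \chi(h),[v_1,v_2]\rangle - \lambda(v_1,v_2) + \lambda(\Ad_{h^{-1}}v_1,\Ad_{h^{-1}}v_2)=0$ (the paper isolates this as Lemma~\ref{lem:integrate-lemma}), then identify the induced Poisson structure directly from \eqref{eq:O_g-def} (the paper does this as Proposition~\ref{prop :symp_realization}). Your organizational choice to split $\omega_\Gamma$ into Lie--Poisson plus magnetic and track their separate defects is a minor repackaging of the same computation; note only that the key cancellation is not literally the group cocycle condition \eqref{Eqt:chi} but a differentiated consequence of it together with $\Psi(\chi)=\lambda^\flat$, exactly as the paper's proof of Lemma~\ref{lem:integrate-lemma} makes explicit.
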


We need the following lemma.

\begin{lemma}\label{lem:integrate-lemma}
Suppose that the Lie group 1-cocycle $\chi\colon G\to \g^*$ integrates $\lambda^{\flat}\colon \g\to \g^*$. Then the following identity holds:
$$\langle \chi(h), [v_1,v_2]\rangle - \lambda(v_1,v_2)
			+\lambda(\Ad_{h^{-1}}v_1,\Ad_{h^{-1}}v_2)=0,$$
            for all $v_1,v_2\in\g$ and $h\in G.$
\end{lemma}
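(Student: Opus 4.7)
The plan is to fix $v_1,v_2\in\g$ and treat the left-hand side as a function $F\colon G\to\R$, defined by
\[
F(h) := \langle \chi(h),[v_1,v_2]\rangle - \lambda(v_1,v_2) + \lambda(\Ad_{h^{-1}}v_1,\Ad_{h^{-1}}v_2).
\]
First I would note that $\chi(e)=0$ (from the cocycle identity with $g=h=e$), so $F(e)=0$. Since $G$ is connected by assumption, it then suffices to show $X_uF=0$ for every left-invariant vector field $X_u$ on $G$ generated by $u\in\g$; this will force $F$ to be constant, hence identically zero.

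The main technical input is a formula for the derivative of $\chi$ along a left-invariant vector field. Using the cocycle identity $\chi(h\exp(tu)) = \Ad^*_{h^{-1}}\chi(\exp(tu)) + \chi(h)$ together with the integration condition $\tfrac{d}{dt}\big|_{t=0}\chi(\exp(tu)) = \lambda^\flat(u)$, I obtain
\[
X_u\chi\big|_h \;=\; \Ad^*_{h^{-1}}\lambda^\flat(u).
\]
Combining this with the chain rule applied to $\Ad_{(h\exp(tu))^{-1}}v_i = \Ad_{\exp(-tu)}\Ad_{h^{-1}}v_i$, and abbreviating $w_i:=\Ad_{h^{-1}}v_i$, a direct calculation reduces $X_uF|_h$ to
\[
\lambda(u,[w_1,w_2]) \;-\; \lambda([u,w_1],w_2) \;-\; \lambda(w_1,[u,w_2]).
\]

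The final step is to recognize this expression as the Chevalley--Eilenberg cocycle identity for $\lambda$ applied to the triple $(u,w_1,w_2)$, up to antisymmetry of the Lie bracket and of $\lambda$; it therefore vanishes by the hypothesis that $\lambda$ is a Lie algebra $2$-cocycle. I do not anticipate a genuine obstacle here: the computation is mechanical, and the only delicate point is to keep the sign conventions for $\Ad^*$ and $\ad^*$ consistent with the identity $\ad^*_v\lambda^\flat(w) - \ad^*_w\lambda^\flat(v) = \lambda^\flat([v,w])$ stated just before the lemma, so that the three terms above really do match the cocycle relation term by term.
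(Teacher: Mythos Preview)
Your proof is correct, but it takes a genuinely different route from the paper. The paper does not differentiate along $G$ and invoke connectedness; instead it computes $\lambda(\Ad_{h^{-1}}v_1,\Ad_{h^{-1}}v_2)$ directly at each fixed $h$ by writing it as
\[
\frac{d}{dt}\Big|_{t=0}\big\langle \chi(h^{-1}\exp(tv_1)h),\ \Ad_{h^{-1}}v_2\big\rangle
\]
and then expanding $\chi(h^{-1}\exp(tv_1)h)$ via two successive applications of the group cocycle identity \eqref{Eqt:chi}. The three resulting terms differentiate to $-\langle\chi(h),[v_1,v_2]\rangle$, $\lambda(v_1,v_2)$, and $0$, giving the identity pointwise. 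In particular the paper's argument never explicitly invokes the Lie algebra $2$-cocycle condition for $\lambda$, nor connectedness of $G$. Your approach trades the second application of the group cocycle identity for the infinitesimal $2$-cocycle condition on $\lambda$ and the hypothesis that $G$ is connected; what it buys is a more conceptual punchline, since the vanishing of $X_uF$ is visibly the Chevalley--Eilenberg closedness of $\lambda$.
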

\begin{proof}
A straightforward computation yields the following:
		\begin{align*}			&\lambda(\Ad_{h^{-1}}v_1,\Ad_{h^{-1}}v_2)\\
			&\quad= \frac{d}{dt}|_{t=0}\langle \chi(h^{-1}\exp(tv_1)h), \Ad_{h^{-1}}v_2\rangle \\
            &\quad = \frac{d}{dt}|_{t=0} \langle \Ad_h^*\chi(\exp(tv_1)h) + \chi(h^{-1})\rangle, \Ad_{h^{-1}} v_2\rangle \\
            & \quad= \frac{d}{dt}|_{t=0} \langle \Ad_h^*\Ad_{\exp(-tv_1)}^*\chi(h) + \Ad_h^* \chi(\exp(tv_1)) + \chi(h^{-1}),\Ad_{h^{-1}}v_2\rangle 
            \\
			&\quad=-\langle \chi(h), [v_1,v_2]\rangle + \lambda(v_1,v_2).
		\end{align*}

\end{proof}

\begin{propn}\label{prop: multiplicative}
	The symplectic form $\omega_{\Gamma}$ is multiplicative on $\Gamma$, that is, ${\rm pr}_1^*\omega_{\Gamma}+{\rm pr}_2^*\omega_{\Gamma}-\mathbf{m}_{\Gamma}^*\omega_{\Gamma} = 0.$  
\end{propn}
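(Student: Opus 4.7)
The plan is to verify multiplicativity by direct computation, reducing the only nontrivial identity to Lemma~\ref{lem:integrate-lemma}.

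First I would left-trivialize the tangent bundles, so that a tangent vector at $(g,\eta)\in\Gamma$ is recorded as a pair $(v,\xi)\in\g\times\g^*$. Using the Lie group cocycle identity for $\chi$ and the integration property $\frac{d}{dt}|_{t=0}\chi(\exp(tv))=\lambda^{\flat}(v)$, a short calculation gives
\begin{equation*}
d\mathbf{t}_{(h,\eta)}(v,\beta)=\Ad^*_{h^{-1}}\bigl(-\ad^*_v\eta+\beta-\lambda^{\flat}(v)\bigr),
\end{equation*}
which determines the composability constraint at a pair $((g,\xi),(h,\eta))$ (subject to $\xi=\Ad^*_{h^{-1}}\eta-\chi(h)$), while $d\mathbf{m}$ sends a composable pair of left-trivialized tangent vectors $((u,\alpha),(v,\beta))$ to $(\Ad_{h^{-1}}u+v,\beta)$ at $(gh,\eta)$.

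Next I would substitute these formulas into the explicit expression \eqref{eq:O_g-def} and expand $\mathbf{m}^*\omega_\Gamma-{\rm pr}_1^*\omega_\Gamma-{\rm pr}_2^*\omega_\Gamma$. Organizing the result by the origin of each term, the combination splits into three groups: the terms involving the $\g^*$-components $\beta_i$ cancel identically; the terms linear in $\eta$ collapse by the equivariance identity $\Ad_{h^{-1}}[u_1,u_2]=[\Ad_{h^{-1}}u_1,\Ad_{h^{-1}}u_2]$; and what remains is the ``affine'' expression
\begin{equation*}
\lambda(u_1,u_2)-\lambda(\Ad_{h^{-1}}u_1,\Ad_{h^{-1}}u_2)-\langle\chi(h),[u_1,u_2]\rangle,
\end{equation*}
which vanishes by Lemma~\ref{lem:integrate-lemma} (specialized to $v_1=u_1$, $v_2=u_2$). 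This establishes the multiplicativity identity.

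The computation is conceptually elementary, so the main obstacle is bookkeeping. One must track several simultaneous occurrences of $\Ad_{h^{-1}}$ and $\Ad^*_{h^{-1}}$, keep the left-trivialized directions $u_i$ and $v_i$ separate, and fix sign conventions for $\ad^*$ at the outset. Grouping terms by their $\beta$-, $\eta$-, and $\lambda$-content makes transparent that the first two groups cancel for purely kinematic reasons, and isolates the single identity of Lemma~\ref{lem:integrate-lemma} as the real content of the proposition.
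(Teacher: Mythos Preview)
Your proposal is correct and follows essentially the same route as the paper: both left-trivialize, compute the pushforwards of $\mathrm{pr}_1$, $\mathrm{pr}_2$, and $\mathbf{m}_\Gamma$, substitute into the explicit formula \eqref{eq:O_g-def}, and reduce the residual expression to Lemma~\ref{lem:integrate-lemma}. The only cosmetic difference is that the paper parametrizes $\Gamma_2\cong G\times G\times\g^*$ and writes out every term, whereas you keep the composable-pair picture and group terms by their $\beta$-, $\eta$-, and $\lambda$-content; the computations are otherwise identical.
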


\begin{proof}
We first establish an identification between 
	$\Gamma_2$ and $G\times G \times \mathfrak{g}^*$ via the mapping $$\big((g,\beta),(h,\eta)\big) \mapsto (g,h,\eta),\quad \text{for all}~ g,h\in G,\quad \beta,\eta\in\g^*.$$
    Under this identification, the projection maps ${\rm pr}_1, {\rm pr}_2\colon \Gamma_2\to \Gamma$  and the groupoid multiplication $\textbf{m}_\Gamma\colon \Gamma_2\to \Gamma$  take explicit forms:
 $${\rm pr}_1(g,h,\eta) = (g,h\cdot \eta), \quad {\rm pr}_2(g,h,\eta) = (h, \eta),\quad \textbf{m}_{\Gamma}(g,h,\eta) = (gh,\eta),$$
 for all $g,h\in G$ and $\eta\in\g^*.$

For any $(v,w,\xi)\in \mathfrak{g}\times \mathfrak{g}\times\mathfrak{g}^* \cong T_{(g,h,\eta)}(G\times G\times \mathfrak{g}^*)$,
we compute the pushforward ${{\rm pr}_1}_*(v,w,\xi)$ 
 as follows:
    \begin{align*}
     {{\rm pr}_1}_*(v,w,\xi)&= \frac{d}{dt}|_{t=0}{\rm pr}_1\big(g\exp(tv),h\exp(tw),\eta+t\xi\big) \\
    &=\frac{d}{dt}|_{t=0}\big(g\exp(tv),h\exp(tw)\cdot(\eta+t\xi)\big)\\
    &=\big(v,  \frac{d}{dt}|_{t=0}h\exp(tw)\cdot \eta + {\rm Ad}_{h^{-1}}^*\xi\big)\\
    &=\big(v,  \frac{d}{dt}|_{t=0} {\rm Ad}_{h^{-1}}^* ({\rm Ad}_{\exp(-tw)}^*  \eta - \chi(\exp(tw)))  + \rm Ad_{h^{-1}}^*\xi\big)\\
    &= \big(v, \Ad^*_{h^{-1}}(\xi-\ad^*_{w}\eta - \lambda^{\flat}(w))\big).    
    \end{align*}  
Similarly, the pushforward of the projection map ${\rm pr}_1$ and multiplication map $\mathbf{m}_{\Gamma}$ are:
	\begin{equation*}
		\begin{aligned}
			{{\rm pr}_2}_*(v,w,\xi)&=(w,\xi), \\
			{\mathbf{m}_{\Gamma}}_*(v,w,\xi)&= (w+\Ad_{h^{-1}}v, \xi).
		\end{aligned}
	\end{equation*}

Using the explicit formula for $\omega_\Gamma$
 in Equation \eqref{eq:O_g-def}, we compute the pullbacks of $\omega_\Gamma$  along ${\rm pr}_1, {\rm pr}_2$ and $\mathbf{m}_{\Gamma}$  for arbitrary tangent vectors $(v_1,w_1,\xi_1),(v_2,w_2,\xi_2)\in T_{(g,h,\eta)}(G\times G\times \mathfrak{g}^*)$. First, the pullback along ${\rm pr}_1$:
\begin{align*}
			&({\rm pr}_1^*\omega_{\Gamma})((v_1,w_1,\xi_1),(v_2,w_2,\xi_2))\\
			=& \langle \xi_1, \Ad_{h^{-1}}v_2 \rangle -\langle \eta, [w_1,\Ad_{h^{-1}}v_2] \rangle - \lambda (w_1,\Ad_{h^{-1}}v_2)\\
			&-\langle \xi_2, \Ad_{h^{-1}}v_1 \rangle +\langle \eta, [w_2,\Ad_{h^{-1}}v_1] \rangle + \lambda (w_2,\Ad_{h^{-1}}v_1) \\
			& - \langle \Ad_{h^{-1}}[v_1,v_2] , \eta \rangle + \langle \chi(h), [v_1,v_2]\rangle - \lambda(v_1,v_2).
		\end{align*}
Next, the pullback along ${\rm pr}_2$:
	\begin{equation*}
		\begin{aligned}
			&({\rm pr}_2^*\omega_{\Gamma})((v_1,w_1,\xi_1),(v_2,w_2,\xi_2))\\
			=& \langle w_2, \xi_1 \rangle - \langle w_1, \xi_2 \rangle -\langle [w_1,w_2], \eta \rangle - \lambda(w_1,w_2).
		\end{aligned}
	\end{equation*}
	Finally, the pullback along 
$\textbf{m}_\Gamma$:
	\begin{equation*}
		\begin{aligned}
			&(\textbf{m}_{\Gamma}^*\omega_{\Gamma})((v_1,w_1,\xi_1),(v_2,w_2,\xi_2))\\
			=& \langle w_2, \xi_1 \rangle - \langle w_1, \xi_2 \rangle +\langle \Ad_{h^{-1}}v_2, \xi_1 \rangle - \langle \Ad_{h^{-1}}v_1, \xi_2 \rangle\\
			&-  \langle [w_1,w_2], \eta \rangle - \langle \Ad_{h^{-1}}[v_1,v_2],\eta \rangle - \langle \eta, [w_1,\Ad_{h^{-1}}v_2]\rangle + \langle \eta, [w_2,\Ad_{h^{-1}}v_1]\rangle \\
			&- \lambda(w_1,w_2) - \lambda(w_1,\Ad_{h^{-1}}v_2) - \lambda(\Ad_{h^{-1}}v_1, w_2) - \lambda(\Ad_{h^{-1}}v_1,\Ad_{h^{-1}}v_2).
		\end{aligned}
	\end{equation*}

By substituting these pullbacks into the combination ${\rm pr}_1^*\omega_{\Gamma}+{\rm pr}_2^*\omega_{\Gamma}-\textbf{m}_{\Gamma}^*\omega_{\Gamma}$ 
 and simplifying using Lemma \ref{lem:integrate-lemma}, we find that all cross terms cancel, leaving:
	\begin{equation*}
		\begin{aligned}
			&({\rm pr}_1^*\omega_{\Gamma}+{\rm pr}_2^*\omega_{\Gamma}-\textbf{m}_{\Gamma}^*\omega_{\Gamma})((v_1,w_1,\xi_1),(v_2,w_2,\xi_2))\\
			&\quad= \langle \chi(h), [v_1,v_2]\rangle - \lambda(v_1,v_2)
			+\lambda(\Ad_{h^{-1}}v_1,\Ad_{h^{-1}}v_2)\\ &\quad =0.
		\end{aligned}
	\end{equation*}
So we conclude that $\omega_\Gamma$ is multiplicative.	
	
\end{proof}

To establish that $(\Gamma, \omega_{\Gamma})$ is the symplectic groupoid of $(\g^*,\pi_{\lambda})$, a key requirement is that $\mathbf{t}_\Gamma: (\Gamma, \omega_{\Gamma})\rightarrow (\g^*,\pi_{\lambda})$ constitutes a symplectic realization. Indeed, this is equivalent to verifying that $\mathbf{s}_{\Gamma}: (\Gamma, \omega_{\Gamma})\rightarrow (\g^*,-\pi_{\lambda})$ is a symplectic realization. A detailed proof of this result can be found in \cite{marsdenbook}; here, we present a concise sketch for the sake of completeness.

\begin{propn}\label{prop :symp_realization} 
	The groupoid source map $\mathbf{s}_{\Gamma}:(\Gamma,\omega_{\Gamma})\rightarrow (\g^*,-\pi_{\lambda})$ is a symplectic realization. 
\end{propn}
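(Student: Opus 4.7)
The goal is to verify: first, that $(\Gamma,\omega_\Gamma)$ is symplectic; and second, that the projection $\mathbf{s}_\Gamma\colon(g,\eta)\mapsto\eta$ is a Poisson map into $(\mathfrak{g}^*,-\pi_\lambda)$, surjectivity and submersion being automatic. Non-degeneracy of $\omega_\Gamma$ is immediate from \eqref{eq:O_g-def}: the magnetic correction $-\lambda(v_1,v_2)$ contributes only to the $\mathfrak{g}\wedge\mathfrak{g}$-block of the matrix of $\omega_\Gamma$, leaving intact the nondegenerate duality pairing between $\mathfrak{g}$ and $\mathfrak{g}^*$ encoded in the first two terms. Hence $\omega_\Gamma$ is symplectic, and it remains to check the Poisson property.

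Since the linear functionals $l_v\in C^\infty(\mathfrak{g}^*)$ for $v\in\mathfrak{g}$ generate $C^\infty(\mathfrak{g}^*)$ as a Poisson algebra, it is enough to verify
\[
\{\mathbf{s}_\Gamma^*l_{v_1},\mathbf{s}_\Gamma^*l_{v_2}\}_{\omega_\Gamma}=\mathbf{s}_\Gamma^*\{l_{v_1},l_{v_2}\}_{-\pi_\lambda}
\]
for all $v_1,v_2\in\mathfrak{g}$. The plan has three steps. First, compute the Hamiltonian vector field $X_{\mathbf{s}_\Gamma^*l_v}$ at $(g,\eta)$. Since $d(\mathbf{s}_\Gamma^*l_v)_{(g,\eta)}(w,\zeta)=\langle\zeta,v\rangle$ for $(w,\zeta)\in T_{(g,\eta)}\Gamma\cong\mathfrak{g}\times\mathfrak{g}^*$, solving $\omega_\Gamma((a,\alpha),(w,\zeta))_{(g,\eta)}=\langle\zeta,v\rangle$ for all $(w,\zeta)$ via \eqref{eq:O_g-def} pins down $a=-v$ and $\alpha=-\ad_v^*\eta-\lambda^\flat(v)$. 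Second, substitute these two Hamiltonian vector fields back into \eqref{eq:O_g-def} to evaluate $\omega_\Gamma(X_{\mathbf{s}_\Gamma^*l_{v_1}},X_{\mathbf{s}_\Gamma^*l_{v_2}})$; skew-symmetry of $\lambda$ and of $[-,-]$ produces wholesale cancellation of the cross terms, yielding $\langle\eta,[v_1,v_2]\rangle+\lambda(v_1,v_2)$. Third, apply the sign convention $\{f_1,f_2\}=-\omega_\Gamma(X_{f_1},X_{f_2})$ associated to $\iota_{X_f}\omega_\Gamma=df$ and compare with $\mathbf{s}_\Gamma^*\{l_{v_1},l_{v_2}\}_{-\pi_\lambda}(g,\eta)=-\langle\eta,[v_1,v_2]\rangle-\lambda(v_1,v_2)$ read off from \eqref{Eqt:affine-lie-algebra}; the two expressions agree.

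The computation is routine linear algebra on $\mathfrak{g}\times\mathfrak{g}^*$; the only genuine obstacle is sign bookkeeping, both from the convention relating $X_f$ to $\omega_\Gamma$ and from the overall minus sign in $\omega_\Gamma=-\varphi^*\omega_M$. Notably, no appeal to the integration identity of Lemma \ref{lem:integrate-lemma} is needed, because this is an infinitesimal statement along the $\mathbf{s}_\Gamma$-fibres in which the group cocycle $\chi$ never appears; all nontrivial input from the affine twist enters solely through the Lie-algebra quantities $\lambda$ and $\lambda^\flat$. This also explains why Theorem \ref{thm: symplectic_groupoid} as a whole does not require simple-connectedness beyond the point at which Proposition \ref{prop: multiplicative} invokes the existence of $\chi$.
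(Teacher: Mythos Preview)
Your argument is correct and follows essentially the same route as the paper's proof: both compute the Hamiltonian vector field $\omega_\Gamma^{-1}(\mathbf{s}_\Gamma^*dl_v)=(-v,-\ad_v^*\eta-\lambda^\flat(v))$ directly from \eqref{eq:O_g-def}, plug a pair of these into $\omega_\Gamma$, and read off $-\langle\eta,[v_1,v_2]\rangle-\lambda(v_1,v_2)=-\pi_\lambda(dl_{v_1},dl_{v_2})$. Your added remarks on non-degeneracy and on the irrelevance of Lemma~\ref{lem:integrate-lemma} here are accurate but go slightly beyond what the paper spells out.
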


\begin{proof}
By the definition of $\omega_{\Gamma}$ given in Equation \eqref{eq:O_g-def}, we have 
	$$\omega_\Gamma^{-1}(\mathbf{s}_\Gamma^* dl_{v})_{(g,\xi)}=(-v, -\ad^*_v\xi-\lambda^\flat(v)),$$
	for any $(g,\xi)\in\Gamma$ and $v\in\g.$ Now, for any $v_1,v_2\in\g$, one can derive
	\begin{align*}
		&\pi_{\Gamma}(\mathbf{s}_\Gamma^* dl_{v_1},\mathbf{s}_\Gamma^* dl_{v_2})_{(g,\xi)}\\
		=&-\omega_\Gamma\big((-v_1, -\ad^*_{v_1}\xi-\lambda^\flat(v_1)),(-v_2, -\ad^*_{v_2}\xi-\lambda^\flat(v_2))\big)\\
		=&-\langle \xi,[v_1,v_2]\rangle-\lambda(v_1,v_2)\\
		=&-\pi_{\lambda}(dl_{v_1},dl_{v_2})_\xi.
	\end{align*}
This directly implies that $(\mathbf{s}_\Gamma)_*\pi_{\Gamma}=-\pi_{\lambda},$ which is the desired conclusion.
	
\end{proof}

Combining Propositions \ref{prop: multiplicative} and \ref{prop :symp_realization}, we have established Theorem \ref{thm: symplectic_groupoid}.

\subsection{Second approach: reduction of cotangent bundle}\label{Sec:Second-construction}
\noindent 

Now we present an alternative approach for the construction of symplectic groupoid, adapting the method developed in  \cite{B-X-Z}. The idea is to recognize the affine Poisson manifold $(\g^*,\pi_{\lambda})$  as a Poisson submanifold of some dual Lie algebra $\widetilde{\g}^*$ equipped with the standard Lie-Poisson structure $\widetilde{\pi}_{\rm Lie}$. Then the symplectic groupoid $\Gamma$ of $\pi_{\lambda}$  can be constructed via symplectic reduction of the symplectic groupoid associated to $(\widetilde{\g}^*, \widetilde{\pi}_{\rm Lie})$. Notably, this method start with a stronger assumption, namely the existence of a group extension, but the construction yields naturally a central extension of Lie groupoids, which play an important role in later sections.

Any Lie algebra 2-cocycle $\lambda \in \wedge^2\g^*$ determines a $\mathbb{R}$-central extension of the Lie algebra $\g$, as follows. Endow $\widetilde{\mathfrak{g}}=\mathfrak{g}\oplus\mathbb{R}$ with the Lie brackets
\begin{equation*}
	[(v_1,r_1),(v_2,r_2)]_{\widetilde{\mathfrak{g}}}=([v_1,v_2]_{\mathfrak{g}},\lambda(v_1,v_2)),    
\end{equation*}
for any $(v_i,r_i)\in \widetilde{\mathfrak{g}}=\mathfrak{g}\oplus\mathbb{R}.$ Then it fits into the short exact sequence 
\begin{equation}\label{SES_Lie_Alg}
	0\rightarrow \mathbb{R}\rightarrow \widetilde{\g} \rightarrow \g\rightarrow 0     
\end{equation}
where the second arrow denote the inclusion into $\mathbb{R}$-component of $\widetilde{\g}$, and the third arrow is the projection onto $\g$. We can  embed $\mathfrak{g}^*$ in $\widetilde{\mathfrak{g}}^*$ via the map $j(\xi) := (\xi, 1).$ 
Then $(\mathfrak{g}^*, \pi_{\lambda})$ is a Poisson submanifold of $(\widetilde{\mathfrak{g}}^*, \widetilde{\pi}_{\rm Lie})$.

Regarding the central extension of  Lie algebras \eqref{SES_Lie_Alg}, let us recall the following result concerning its integration.

\begin{lemma}[\cite{Neeb}, Theorem I.2]\label{lem:Neeb}
	Assume that $G$ is a simply connected Lie group with Lie algebra $\g$, then there exists a Lie group $\widetilde{G}$ with Lie algebra $\widetilde{\mathfrak{g}}$, and a central extension 
	\begin{equation}\label{SESLieGrp}
		1 \xrightarrow[]{} \mathbb{S}^1 \xrightarrow[]{i}\widetilde{G} \xrightarrow[]{p} G \xrightarrow[]{}  1,  
	\end{equation}
	integrating the central extension \eqref{SES_Lie_Alg}.
\end{lemma}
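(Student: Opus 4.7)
The plan is to integrate \eqref{SES_Lie_Alg} in two steps: first apply Lie's third theorem to produce a candidate Lie group integrating $\widetilde{\g}$, then compactify its central direction from $\mathbb{R}$ to $\mathbb{S}^1$ by a discrete quotient.

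First I would invoke Lie's third theorem to obtain a connected and simply connected Lie group $\widehat{G}$ with Lie algebra $\widetilde{\g}$. The simply connectedness of $\widehat{G}$ ensures that the Lie algebra projection $\widetilde{\g}\twoheadrightarrow\g$ exponentiates uniquely to a smooth homomorphism $p\colon\widehat{G}\to G$, and this map is surjective because $p_\ast$ is surjective and $G$ is connected. Its kernel $K:=\ker(p)$ is a closed central Lie subgroup of $\widehat{G}$ whose Lie algebra is the central copy of $\mathbb{R}$ in $\widetilde{\g}$.

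Next I would pin down the topology of $K$ via the long exact sequence of homotopy groups associated to the principal fibration $K\to\widehat{G}\to G$. Using $\pi_1(\widehat{G})=0$ by construction, $\pi_1(G)=0$ by hypothesis, and the classical fact that $\pi_2$ of any finite-dimensional Lie group vanishes, the exact sequence forces $\pi_0(K)=\pi_1(K)=0$. Hence $K$ is a connected and simply connected one-dimensional abelian Lie group, so $K\cong\mathbb{R}$ as Lie groups.

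Finally I would select a discrete subgroup $\Gamma\cong\mathbb{Z}$ inside $K$ and define $\widetilde{G}:=\widehat{G}/\Gamma$. Since $\Gamma$ is discrete and central, $\widetilde{G}$ is a Lie group whose Lie algebra is still $\widetilde{\g}$, and $p$ descends to a surjection $\widetilde{p}\colon\widetilde{G}\to G$ with kernel $K/\Gamma\cong\mathbb{R}/\mathbb{Z}\cong\mathbb{S}^1$ sitting centrally in $\widetilde{G}$, yielding the sequence \eqref{SESLieGrp}. Differentiating this sequence recovers \eqref{SES_Lie_Alg} by construction. The one delicate point is the homotopy-theoretic identification $K\cong\mathbb{R}$, which is precisely where the simply connectedness hypothesis on $G$ is used; the subsequent compactification by $\Gamma$ is a routine quotient construction. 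This argument is a special case of Neeb's general integration theorem cited in the statement, but the finite-dimensional setting admits this direct proof.
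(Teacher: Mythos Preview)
The paper does not supply a proof of this lemma; it simply quotes the result from \cite{Neeb}. Your argument is a correct, self-contained proof in the finite-dimensional setting, and the outline (integrate $\widetilde{\g}$ to its simply connected group, identify the kernel of the induced map to $G$ as $\mathbb{R}$ via the homotopy long exact sequence using $\pi_2(G)=0$ and $\pi_1(G)=0$, then quotient by a lattice $\mathbb{Z}\subset\mathbb{R}$) is the standard one. One small point of presentation: you assert that $K=\ker(p)$ is central before you have established that $K$ is connected; the clean logical order is to first run the homotopy argument to get $\pi_0(K)=0$, and then deduce centrality from the fact that a connected normal subgroup of a connected Lie group whose Lie algebra lies in the center is itself central. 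With that reordering the argument is complete. Neeb's theorem, as cited, is more general (covering infinite-dimensional situations where Lie's third theorem and $\pi_2=0$ are not available), but for the purposes of this section your direct approach suffices and makes the role of the simply connectedness hypothesis on $G$ transparent.
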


From now on, we assume the existence of central extension \eqref{SESLieGrp}. Next, we investigate the relation between the symplectic groupoid of $(\mathfrak{g}^*, \pi_{\lambda})$ and $(\widetilde{\mathfrak{g}}^*, \widetilde{\pi}_{\rm Lie})$. By Section \ref{Sec:preliminary}, the cotangent groupoid $(T^*\widetilde{G},-\widetilde{\omega}_{\rm can})$ is symplectic groupoid of $(\widetilde{\g}^*,\widetilde{\pi}_{\rm Lie})$. Let $\widetilde{\varphi}: \widetilde{G}\times \widetilde{\g}^*\rightarrow T^*\widetilde{G}$ be the diffeomorphism induced by left translations, then the groupoid $T^*\widetilde{G}$ can be viewed as the transformation groupoid $\widetilde{G}\times \widetilde{\g}^*$ with respect to the coadjoint action of $\widetilde{G}$ on $\widetilde{\g}^*$.  For simplicity, we denote the groupoid $\widetilde{G}\times \widetilde{\g}^*$ as $\widetilde{\Gamma}$.

In view of the central extension \eqref{SESLieGrp}, $\mathbb{S}^1$ is a Lie subgroup of $\widetilde{G}$, which induces a natural $\mathbb{S}^1$-action on $\widetilde{G}$ by left multiplication. This action lifts to an 
$\mathbb{S}^1$-action on the cotangent bundle $T^*\widetilde{G}$, thereby making $\widetilde{\Gamma}\cong T^*\widetilde{G}$ a Hamiltonian $\mathbb{S}^1$-space. We now explicitly describe this action and its associated moment map on $\widetilde{\Gamma}.$
The action map $\widetilde{F}:\mathbb{S}^1 \times \widetilde{\Gamma}\rightarrow \widetilde{\Gamma}$ is given by $\widetilde{F}(s,(\widetilde{g},\widetilde{\eta})) = (s\widetilde{g},\widetilde{\eta}),$ for all $s\in\mathbb{S}^1$ and $(\widetilde{g},\widetilde{\eta})\in\widetilde{\Gamma}.$ Let ${\rm pr}_{\mathbb{R}}:\widetilde{\g}^*=\g^*\oplus \mathbb{R}\rightarrow \mathbb{R}$ be the projection. The moment map $\widetilde{\mu}:\widetilde{\Gamma} \rightarrow \mathbb{R}$ is given by $\widetilde{\mu}={\rm pr}_{\mathbb{R}}\circ \widetilde{\mathbf{t}}$. Here $\widetilde{\mathbf{t}}$ denote the target map of the groupoid $\widetilde{G}\times \widetilde{\g}^*.$ 
According to Section \ref{Sec:preliminary},
$(\widetilde{\Gamma}, \widetilde{F}, \widetilde{\mu}, -\widetilde{\varphi}^*\widetilde{\omega}_{\rm can})$ is a Hamiltonian $\mathbb{S}^1$-space.

Observe that $1$ is a regular value of $\widetilde{\mu}$, and $\widetilde{\mu}^{-1}(1) = \widetilde{G}\times (\g^*\times\{1\})$. Since $\mathbb{S}^1$ only act on the $\widetilde{G}$-component of $\widetilde{\Gamma}$,  it follows that the quotient space $\widetilde{\mu}^{-1}(1)/\mathbb{S}^1$ can be expressed as $(\widetilde{G}/\mathbb{S}^1)\times (\g^*\times\{1\})$, which is diffeomorphic to $G\times \g^*=\Gamma$ as smooth manifolds. Indeed, the groupoid structure on $\widetilde{\mu}^{-1}(1)/\mathbb{S}^1$ also  coincides with the transformation groupoid ${\Gamma}$. 
\begin{propn}\label{propn: Ham_space1}
	The transformation groupoid $\widetilde{\Gamma}$ has a natural structure of Hamiltonian $\mathbb{S}^1$-space. At the regular value $1$ of the moment map, it reduces to the groupoid $\Gamma$.
\end{propn}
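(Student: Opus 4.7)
The plan is to verify the two claims separately: (i) $\widetilde{\Gamma}$ carries a Hamiltonian $\mathbb{S}^1$-space structure with moment map $\widetilde{\mu} = {\rm pr}_{\mathbb{R}} \circ \widetilde{\mathbf{t}}$, and (ii) Marsden--Weinstein reduction at the value $1$ recovers the action groupoid $\Gamma$ both as a smooth manifold and as a Lie groupoid.

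For (i), I would observe that the $\mathbb{S}^1$-action on $\widetilde{\Gamma}$ is the restriction of the full left $\widetilde{G}$-action (which, after left-trivialization of $T^*\widetilde{G}$, is the Hamiltonian action of Example \ref{Eg:Hamiltonian-space} with $\widetilde{G}$ in place of $G$) along the Lie subgroup inclusion $\mathbb{S}^1 \hookrightarrow \widetilde{G}$. The cotangent lift of any group action automatically preserves $\widetilde{\omega}_{\rm can}$, so the symplectic form is $\mathbb{S}^1$-invariant. Moreover, restricting a moment map to a subgroup amounts to postcomposition with the dual of the Lie algebra inclusion; since $\mathbb{R} \hookrightarrow \widetilde{\mathfrak{g}}$ is $r \mapsto (0,r)$, its dual is precisely ${\rm pr}_{\mathbb{R}}$. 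Hence the $\mathbb{S}^1$-moment map is $\widetilde{\mu} = {\rm pr}_{\mathbb{R}} \circ \widetilde{\mathbf{t}}$, as stated.

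Next I would set up the reduction. Since $\mathbb{R} \subset \widetilde{\mathfrak{g}}$ is central, the coadjoint action of $\widetilde{G}$ on $\widetilde{\mathfrak{g}}^* = \mathfrak{g}^* \oplus \mathbb{R}$ fixes the $\mathbb{R}$-component, so $\widetilde{\mu}(\widetilde{g}, \widetilde{\eta}) = {\rm pr}_{\mathbb{R}}(\widetilde{\eta})$ is simply projection onto the last factor. This is a submersion, so every value (in particular $1$) is regular, and $\widetilde{\mu}^{-1}(1) = \widetilde{G} \times (\mathfrak{g}^* \times \{1\})$. The $\mathbb{S}^1$-action on this level set is free (left multiplication in a group is always free) and proper (as $\mathbb{S}^1$ is compact), so the quotient is a smooth manifold, diffeomorphic to $\Gamma = G \times \mathfrak{g}^*$ via $[(\widetilde{g}, (\xi, 1))] \mapsto (p(\widetilde{g}), \xi)$.

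The crux is to show that the induced groupoid structure on $\widetilde{\mu}^{-1}(1)/\mathbb{S}^1$ coincides with the transformation groupoid of the affine $G$-action on $\mathfrak{g}^*$. The source, unit, inverse, and multiplication maps descend cleanly thanks to centrality of $\mathbb{S}^1$ (for instance $(s\widetilde{g})(s'\widetilde{h}) = ss'\widetilde{g}\widetilde{h}$ ensures the product is well-defined modulo $\mathbb{S}^1 \times \mathbb{S}^1$). The essential check is for the target map. I would set $\Phi(\widetilde{g}, \xi) := {\rm pr}_{\mathfrak{g}^*} \Ad^*_{\widetilde{g}^{-1}}(\xi, 1)$; since the coadjoint action preserves the $\mathbb{R}$-component and is invariant under left multiplication by $\mathbb{S}^1$ (as $\Ad_s = {\rm id}$ for $s \in \mathbb{S}^1$ central), this descends to a smooth $G$-action $\Phi: G \times \mathfrak{g}^* \to \mathfrak{g}^*$. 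Differentiating at $\widetilde{g} = e$ and using the bracket $[(v,0),(w,0)]_{\widetilde{\mathfrak{g}}} = ([v,w], \lambda(v,w))$, the infinitesimal generator in direction $v \in \mathfrak{g}$ is $\xi \mapsto -\ad^*_v \xi - \lambda^{\flat}(v)$. This matches the infinitesimal affine action derived from $g \cdot \xi = \Ad^*_{g^{-1}}\xi - \chi(g)$, using $d\chi|_e = \lambda^{\flat}$. Since $G$ is simply connected, two $G$-actions with the same infinitesimal generator coincide, so $\Phi$ is the affine action and the quotient groupoid is $\Gamma$.

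The main obstacle is exactly this last identification: showing that the descended $\widetilde{G}$-coadjoint action on the affine slice $\mathfrak{g}^* \times \{1\}$ agrees globally with the affine $G$-action defined via the cocycle $\chi$. The bracket computation is routine; the decisive input is the simple-connectedness of $G$ together with the integration provided by Lemma \ref{lem:Neeb}, allowing us to promote an infinitesimal match to a global one.
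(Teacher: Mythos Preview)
Your argument is correct, but the route differs from the paper's. The paper does not compare infinitesimal actions at all: instead it \emph{defines} the cocycle $\chi$ directly from the central extension via $\langle \chi(p(\widetilde{h})), v\rangle := -{\rm pr}_{\mathbb{R}}\circ \widetilde{\Ad}_{\widetilde{h}^{-1}}(v,0)$, then computes the explicit global formula $\widetilde{\Ad}^*_{\widetilde{h}^{-1}}(\xi,t) = (\Ad^*_{p(\widetilde{h})^{-1}}\xi - t\chi(p(\widetilde{h})), t)$ and reads off at $t=1$ that the descended action is literally the affine action~\eqref{Eqt:affine-action}. This has the advantage of being a one-line identification once the coadjoint formula is in hand, and it simultaneously explains the origin of $\chi$ in Section~\ref{Sec:Second-construction}. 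Your approach, by contrast, keeps $\chi$ as the pre-chosen integration from Section~\ref{Sec:first-construction}, matches the two actions only infinitesimally, and then promotes this to a global equality using that $G$ is (simply) connected. This is a perfectly valid alternative; note that connectedness of $G$ already suffices for the last step, since two smooth actions of the same connected group with identical infinitesimal generators must agree. The paper's direct computation avoids this appeal entirely and works under the weaker hypothesis that the central extension~\eqref{SESLieGrp} merely exists.
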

\begin{proof}
Let $\widetilde{\rm Ad}$ and  $\rm Ad$ denote the adjoint representation of $\widetilde{G}$ and $G$, respectively. Given the $\mathbb{S}^1$-central extension \eqref{SESLieGrp}, the corresponding group 1-cocycle integrating $\lambda^{\flat}$  is represented by a map $\chi: G \rightarrow \g^*$, defined by the equation  $$\langle \chi(p(\widetilde{h})), v\rangle := -{\rm pr}_{\mathbb{R}}\circ \widetilde{\Ad}_{\widetilde{h}^{-1}}(v,0)
	,$$ for any $\widetilde{h}\in \widetilde{G}$ and $ v\in \g$. 
From this definition, we derive a formula that elucidates the relation between the adjoint representations of $\widetilde{\rm Ad}$ and  $\rm Ad$. Specifically, for any $\widetilde{h}\in \widetilde{G}$ and $(v,r) \in \widetilde{\mathfrak{g}}=\mathfrak{g}\oplus\mathbb{R}$, we have   
	\begin{equation*}\label{eq:adj-action}
		\widetilde{\Ad}_{\widetilde{h}}(v,r) = (\Ad_{p(\widetilde{h})}v, r-\langle \chi(p(\widetilde{h})^{-1}), v\rangle).   
	\end{equation*}

Next, for any $\widetilde{h}\in\widetilde{G}$ and $(\xi,t) \in \widetilde{\mathfrak{g}}^*= \mathfrak{g}^*\oplus \mathbb{R},$ the coadjoint action is given by
	\begin{equation*}\label{eq:coadj-action}
		\widetilde{\Ad}^*_{\widetilde{h}^{-1}}(\xi,t) = (\Ad^*_{p(\widetilde{h})^{-1}}\xi - t\chi(p(\widetilde{h})), t) \in \mathfrak{g}^*\oplus \mathbb{R}.
	\end{equation*}
By setting $t=1$, we focus on the restricted action of $\widetilde{G}$ on $\g^*\times \{1\}$. This restricted action provides the groupoid structure on $\widetilde{G}\times \g^*$, which is isomorphic to $\widetilde{\mu}^{-1}(1)$. Upon taking the quotient by $\mathbb{S}^1$, this action transforms into the affine action as specified in Equation \eqref{Eqt:affine-action}. Consequently, we conclude that $\widetilde{\mu}^{-1}(1)/\mathbb{S}^1$ is isomorphic to $\Gamma$ as Lie groupoids.
\end{proof}

\begin{remark}
Based on the above discussions, given an $\mathbb{S}^1$-central extension of Lie groups $\widetilde{G}$ over $G$, one can get a $\mathbb{S}^1$-central extension of Lie groupoids $\widetilde{G}\times \g^*$ over $G\times \g^*$. See Section \ref{Sec:5.1}.  
\end{remark}

Next, we aim to demonstrate that the reduced symplectic form of $-\widetilde{\varphi}^*\widetilde{\omega}_{\rm can} \in \Omega^2(\widetilde{\Gamma})$ coincides with $\omega_{\Gamma}$ as  in Equation \eqref{eq:O_g-def}. To this end, we define the projection $\widetilde{p}: \widetilde{\mu}^{-1}(1) \rightarrow \Gamma$ by $\widetilde{p}(\widetilde{h},(\eta,1)) = (p(\widetilde{h}),\eta)$, and  introduce the natural inclusion $\widetilde{j}: \widetilde{\mu}^{-1}(1)\rightarrow \widetilde{\Gamma}$. The following result for symplectic reduction of cotangent bundle at regular value $1$ is classical, and its proof can be found in \cite{marsdenbook}. We sketch a proof here for completion.

\begin{propn}\label{prop:rel_OG&OC}
On $\widetilde{\mu}^{-1}(1),$	the formula  $-\widetilde{j}^*\widetilde{\varphi}^*\widetilde{\omega}_{\rm can} = \widetilde{p}^*\omega_{\Gamma}$ holds, which implies that the symplectic form $-\widetilde{\varphi}^*\widetilde{\omega}_{\rm can} \in \Omega^2(\widetilde{\Gamma})$ reduces to $\omega_{\Gamma}$ on $\Gamma \cong\widetilde{\mu}^{-1}(1)/\mathbb{S}^1$. Thus, the reduced symplectic groupoid is ismoorphic to the symplectic groupoid discussed in Section \ref{Sec:first-construction}.
\end{propn}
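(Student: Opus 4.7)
The plan is to verify the identity $\widetilde{j}^*(-\widetilde{\varphi}^*\widetilde{\omega}_{\rm can}) = \widetilde{p}^*\omega_{\Gamma}$ pointwise by a direct comparison of explicit formulas, and then invoke the Marsden--Weinstein reduction theorem to conclude that the descent of $-\widetilde{\varphi}^*\widetilde{\omega}_{\rm can}$ to $\Gamma \cong \widetilde{\mu}^{-1}(1)/\mathbb{S}^1$ is exactly $\omega_\Gamma$. I expect the argument to be largely mechanical; the only conceptual point is that the Lie algebra cocycle $\lambda$ is encoded in the bracket on $\widetilde{\g}$ and reappears as the magnetic term after the moment map level is fixed at $1$.

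First, I apply the formula from Example \ref{example: symplectic-groupoid} to the Lie group $\widetilde{G}$: after left-trivialization, for tangent vectors $(V_i, \Xi_i) \in \widetilde{\g} \oplus \widetilde{\g}^*$ at a point $(\widetilde{g}, \widetilde{\eta}) \in \widetilde{\Gamma}$, one has
\begin{equation*}
-\widetilde{\varphi}^*\widetilde{\omega}_{\rm can}\bigl((V_1,\Xi_1),(V_2,\Xi_2)\bigr) = \langle \Xi_1, V_2\rangle - \langle \Xi_2, V_1\rangle - \langle \widetilde{\eta}, [V_1,V_2]_{\widetilde{\g}}\rangle.
\end{equation*}
Next, I restrict to $\widetilde{\mu}^{-1}(1) = \widetilde{G} \times (\g^* \times \{1\})$. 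Tangent vectors take the form $V_i = (v_i, r_i) \in \g \oplus \mathbb{R}$ and $\Xi_i = (\xi_i, 0) \in \g^* \oplus \mathbb{R}$, the $\mathbb{R}$-component of $\Xi_i$ being forced to vanish for the vector to be tangent to the level set. Substituting $\widetilde{\eta} = (\eta,1)$ and the bracket $[V_1, V_2]_{\widetilde{\g}} = ([v_1,v_2]_\g, \lambda(v_1,v_2))$ into the formula above, the $r_i$ contributions cancel and the central term pairs with the $\mathbb{R}$-component of $\widetilde{\eta}$ to produce $-\lambda(v_1,v_2)$, yielding
\begin{equation*}
\widetilde{j}^*(-\widetilde{\varphi}^*\widetilde{\omega}_{\rm can})\bigl((V_1,\Xi_1),(V_2,\Xi_2)\bigr) = \langle \xi_1, v_2\rangle - \langle \xi_2, v_1\rangle - \langle \eta, [v_1,v_2]_\g\rangle - \lambda(v_1, v_2).
\end{equation*}

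Since $\widetilde{p}_*(V_i, \Xi_i) = (v_i, \xi_i)$ (the differential of $p\colon \widetilde{G}\to G$ along left-trivialization is the Lie algebra projection $\widetilde{\g} \to \g$), the pullback $\widetilde{p}^*\omega_\Gamma$ evaluated on these same tangent vectors is, by Equation \eqref{eq:O_g-def}, identical to the expression above. This establishes the desired pointwise identity. For the descent statement, one invokes that $-\widetilde{\varphi}^*\widetilde{\omega}_{\rm can}$ is $\mathbb{S}^1$-invariant with moment map $\widetilde{\mu}$, and $\widetilde{p}\colon \widetilde{\mu}^{-1}(1) \to \Gamma$ is the principal $\mathbb{S}^1$-bundle projection at a regular value; by Marsden--Weinstein reduction there is a unique form on $\Gamma$ whose $\widetilde{p}$-pullback agrees with $\widetilde{j}^*(-\widetilde{\varphi}^*\widetilde{\omega}_{\rm can})$, and the computation above identifies this reduced form with $\omega_\Gamma$.
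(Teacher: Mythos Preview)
Your proof is correct and follows essentially the same approach as the paper: both arguments verify the identity by writing out the left-trivialized formula for $-\widetilde{\varphi}^*\widetilde{\omega}_{\rm can}$ on $\widetilde{\Gamma}$, restricting to tangent vectors of the form $((v_i,r_i),(\xi_i,0))$ at points $(\widetilde{g},(\eta,1))$, and observing that the extended bracket $[V_1,V_2]_{\widetilde{\g}} = ([v_1,v_2]_\g,\lambda(v_1,v_2))$ paired against $(\eta,1)$ produces exactly the magnetic term in $\omega_\Gamma$. The only difference is cosmetic: you add an explicit appeal to Marsden--Weinstein reduction for the descent statement, which the paper leaves implicit.
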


\begin{proof}
	Since $T_{(\widetilde{g},(\eta,1))}\widetilde{\mu}^{-1}(1)\cong \widetilde{\g}\times \g^*$, for any $(\widetilde{v}_i,\xi_i)\in \widetilde{\g}\times \g^*$,  we compute by Equation \eqref{eq:O_g-def} that
	\begin{equation*}
		\begin{aligned}
			&\widetilde{p}^*\omega_{\Gamma}\big((\widetilde{v}_1,\xi_1),(\widetilde{v}_2,\xi_2)\big)_{(\widetilde{g},(\eta,1))}\\
			=&\omega_{\Gamma}((v_1,\xi_1),(v_2,\xi_2))_{(p(\widetilde{g}), \eta)} \\
			=& \langle \xi_1, v_2 \rangle -  \langle \xi_2, v_1 \rangle - \langle \eta,[v_1,v_2]_{\mathfrak{g}}\rangle - \lambda(v_1,v_2).
		\end{aligned}
	\end{equation*}
	Here $\widetilde{v}_i$ is decomposed as $v_i+r_i$.
	On the other hand,
	\begin{equation*}
		\begin{aligned}
			& \widetilde{j}^*\widetilde{\varphi}^*\widetilde{\omega}_{\rm can}\big((\widetilde{v}_1,\xi_1),(\widetilde{v}_2,\xi_2)\big)_{(\widetilde{g},(\eta,1))}\\
			=& \widetilde{\varphi}^*\widetilde{\omega}_{\rm can}\big((\widetilde{v}_1,(\xi_1,0)),(\widetilde{v}_2,(\xi_2,0))\big)_{(\widetilde{g},(\eta,1))}\\
			=& -\langle \xi_1, v_2\rangle + \langle \xi_2, v_1\rangle+ \langle [\widetilde{v}_1,\widetilde{v}_2]_{\widetilde{\mathfrak{g}}}, (\eta,1)\rangle\\
			=& -\langle \xi_1, v_2 \rangle +  \langle \xi_2, v_1 \rangle + \langle \eta,[v_1,v_2]_{\mathfrak{g}}\rangle + \lambda(v_1,v_2).
		\end{aligned}
	\end{equation*}
	Hence, the claim is established.
\end{proof}

\begin{remark}
	In the work of Behrend-Xu-Zhang \cite{B-X-Z}, a slightly different approach was sketched for obtaining the symplectic form $\omega_{\Gamma}$. Namely, instead of constructing $\omega_{\Gamma}$ directly as Equation \eqref{eq:O_g-def}, one can show that the form $\widetilde{j}^*\widetilde{\omega}_{\rm can}$ on $R:=\widetilde{G}\times \g^*$ is basic with respect to the $\mathbb{S}^1$-bundle $R\rightarrow \Gamma.$ Then one can go through the proof of Proposition \ref{prop:rel_OG&OC} and conclude that the unique form one get on $\Gamma$ is exactly $\omega_{\Gamma}$.    
\end{remark}

\section{The Case of Loop Lie algebras}\label{Sec:loop-case}
In this section, we apply the construction presented in Section \ref{Sec:first-construction} to the case of loop Lie algebras. Furthermore, we delve into the relation between the reduced symplectic form  and the Alekseev-Malkin-Meinrenken (AMM) cocycle on $G\times G\rightrightarrows G$.

\subsection{Loop groups and AMM cocycle}

Throughout the whole section, we assume that $G$ is a compact, connected Lie group, and let $\mathfrak{g}$ be its Lie algebra. Following the notation introduced in \cite{A-M-M}, we denote 
$LG:={\rm Map}(\mathbb{S}^1,G)$ to be the loop group of $G$, where the multiplication on $LG$ is defined to be the pointwise multiplication. Its Lie algebra is given by $L\mathfrak{g} := \Omega^0(\mathbb{S}^1,\mathfrak{g})$.

Let $(\cdot,\cdot)$ be a symmetric invariant bilinear form on $\mathfrak{g}$, where invariance means $([u,v],w) = (u,[v,w]),$ for all $u,v,w\in\g.$ 
We define $L\g^*$ as the space of $1$-forms $\Omega^1(\mathbb{S}^1,\g)= \Omega^1(\mathbb{S}^1)\otimes \g.$ Next, we define a map $\lambda:\wedge^2L\g \rightarrow \mathbb{R}$ by \begin{equation}\label{eq:loopcocycle}
	\lambda(v_1,v_2) = \int_{\mathbb{S}^1}(v_1, d{v_2}).
\end{equation}  Consequently, $\lambda$ induces a map  $\lambda^{\flat}: L\g\rightarrow L\g^*$, given by $\lambda^{\flat}(v) = -d{v}$.

 There is a Van Est morphism
\[
\Psi\colon {\rm H}^1(LG,L\mathfrak g^*) \longrightarrow {\rm H}^1(L\mathfrak g,L\mathfrak g^*)
\]
defined at the cochain level by
\[
\Psi(f)(v)=\left.\frac{d}{dt}\right|_{t=0} f(\exp(tv)),
\]
for all $v\in L\mathfrak g$ and $f\in C^1(LG,L\mathfrak g^*)$. Although the Van Est map need not be an isomorphism in degree one for a general connected Lie group, the Lie algebra cocycle $\lambda^{\flat}$ integrates explicitly in this case. Indeed, it is the image under the Van Est map of the group $1$-cocycle $\chi\colon LG\to L\mathfrak g^*$ defined by $\chi(\gamma)=-(d\gamma)\gamma^{-1}$.

The loop group $LG$ further acts on $L\g^*$ via the gauge action, given by
\begin{equation} \label{eq: LH_on_(Lh)*}
	\gamma \cdot A = \Ad_{\gamma}A +  (d{\gamma})\gamma^{-1},
\end{equation}
for any $\gamma \in LG$ and $A \in L\mathfrak{g}^*$. Therefore, employing the same approach as discussed in Section \ref{Sec:first-construction}, we obtain a symplectic groupoid $(LG \times L\g^*, \omega_{LG \times L\g^*})$.

\begin{propn}\label{thm: LGxLg-symgrpd}
	Let $LG\times L\g^*$ be the transformation groupoid associated to the gauge action defined in Equation \eqref{eq: LH_on_(Lh)*}. For any tangent vector $(v_i,A_i) \in L\mathfrak{g}\times L\mathfrak{g}^* \cong T_{(\gamma,A)}(LG\times L\g^*), $ we describe the canonical 2-form with magnetic term
	\begin{equation} \label{eq:O_Lh-def}
		\begin{aligned}
			&\omega_{LG\times L\mathfrak{g}^*}((v_1,A_1),(v_2,A_2))_{(\gamma,A)} \\
			=& \int_{\mathbb{S}^1}(A_1, v_2 )-  ( A_2, v_1 ) - (A,[v_1,v_2]) - (v_1,d{v_2}).  
		\end{aligned}
	\end{equation} 
	Then $(LG\times L\g^*, \omega_{LG\times L\g^*})$ is a symplectic groupoid. 
\end{propn}

\begin{remark}
When $G$ is a compact, simply connected Lie group, then a group central extention of $LG$ exists, and the method from Section \ref{Sec:Second-construction} applies. In this case, we get an $\mathbb{S}^1$-central extension of Lie groupoids over $LG \times L\g^*$, and see Section \ref{sec: ThmA} for further discussions.
\end{remark}

Next we recall the AMM cocycle from \cite{A-M-M}. Consider the conjugation action of $G$ on itself, and let $D := G \times G$ be the corresponding arrow space of the transformation groupoid over $G$.
Denote by $\theta$ and $\bar{\theta}$  the left-invariant and right-invariant Maurer-Cartan form on $G$, respectively. Let $\Omega \in \Omega^3(G)$ be the invariant 3-form, given by $\Omega = \frac{1}{12}(\theta,[\theta, \theta])$.  The 2-form $\omega_D \in \Omega^2(D)$ is defined by
\begin{equation}\label{eq:O_D-def}
	{\omega_D}|_{(x,y)}:=\frac{1}{2}(\Ad_yp_1^*\theta, p_1^*\theta)+\frac{1}{2}(p_1^*\theta, p_2^*\theta+p_2^*\bar{\theta}).
\end{equation}
Here, $p_1,p_2:G\times G\rightarrow G$ are the natural projections for first and second components.

\begin{propn}[\cite{Xu}, Proposition 2.8]\label{prop:xu-prop}
	The class $[\omega_D + \Omega]$ is a well-defined class in $ {\rm H}^3_{\rm dR}(D)$, called the AMM class. 
\end{propn}

\subsection{Holonomy and inversion}
Before we establish the relation between $\omega_{LG \times L\mathfrak{g}^*}$ and $\omega_D$, we  recall two key maps: the holonomy map, ${\rm Hol}$, and the inversion map, ${\rm inv}.$ These maps are also used in \cite{A-M-M} and \cite{meinrenken1996symplecticproof}

\subsubsection{The holonomy map}
Recall that for a principal bundle with a connection, the holonomy of a loop on the base manifold is defined as the parallel transport along that loop. In the special case of the trivial $G$-bundle over $\mathbb{S}^1$, the space $\Omega^1(\mathbb{S}^1,\g)$ of connections  can be identified with $L\g^*$. By picking the identity loop ${\rm id}_{\mathbb{S}^1}$ and computing its holonomy, we obtain a map $\rm Hol: L\g^* \rightarrow G$, called \textbf{the holonomy map}.

Let $\Omega G\subset LG$ be the set of loops based at the identity element $e\in G$. 
\begin{lemma}\cite{A-M-M}
	The restricted action of $\Omega G$ on $L\mathfrak{g}^*$ from Equation \eqref{eq: LH_on_(Lh)*} makes $L\mathfrak{g}^*$ a principal $\Omega G$-bundle over $G$. The bundle projection is given by the holonomy map ${\rm Hol}: L\mathfrak{g}^* \rightarrow G$.
\end{lemma}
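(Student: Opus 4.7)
The plan is to prove the lemma by unpacking the holonomy map as a flow of an ODE determined by $A\in L\mathfrak{g}^*$ and then studying how gauge transformations affect this flow. For $A=a(s)ds$, parallel transport $P_A\colon [0,1]\to G$ is defined by $\dot{P}_A P_A^{-1}=a$ with $P_A(0)=e$; the holonomy is ${\rm Hol}(A)=P_A(1)$. The key computation, which I would do first, is the gauge equivariance formula: for any $\gamma\in LG$, the parallel transport of $\gamma\cdot A=\Ad_\gamma A+(d\gamma)\gamma^{-1}$ equals $Q(s)=\gamma(s)P_A(s)\gamma(0)^{-1}$, verified by differentiating $Q$ and checking $\dot{Q}Q^{-1}=\Ad_\gamma a+\dot{\gamma}\gamma^{-1}$. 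Since $\gamma(1)=\gamma(0)$ for a loop, this yields ${\rm Hol}(\gamma\cdot A)=\gamma(0){\rm Hol}(A)\gamma(0)^{-1}$.

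From this master identity I would immediately extract three of the four principal-bundle axioms. First, $\Omega G$-invariance of ${\rm Hol}$: if $\gamma\in\Omega G$ then $\gamma(0)=e$, so ${\rm Hol}(\gamma\cdot A)={\rm Hol}(A)$. Second, freeness of the $\Omega G$-action: if $\gamma\cdot A=A$, then $Q$ and $P_A$ solve the same ODE with the same initial condition $e$, so by uniqueness $\gamma(s)P_A(s)=P_A(s)$, hence $\gamma\equiv e$. Third, transitivity on fibers: given $A_1,A_2\in L\mathfrak{g}^*$ with ${\rm Hol}(A_1)={\rm Hol}(A_2)$, set $\gamma(s):=P_{A_2}(s)P_{A_1}(s)^{-1}$; then $\gamma(0)=e$ and $\gamma(1)=P_{A_2}(1)P_{A_1}(1)^{-1}=e$, so $\gamma\in\Omega G$, and a direct computation using the formulas $a_i=\dot{P}_{A_i}P_{A_i}^{-1}$ shows that $\Ad_\gamma a_1+\dot{\gamma}\gamma^{-1}=a_2$, i.e.\ $\gamma\cdot A_1=A_2$.

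Next I would establish surjectivity of ${\rm Hol}$. Since $G$ is compact and connected, every element has the form $\exp(v)$ for some $v\in\mathfrak{g}$; the constant connection $A=v\,ds$ has parallel transport $P(s)=\exp(sv)$, hence holonomy $\exp(v)$. The remaining, and subtlest, ingredient is the existence of smooth local sections of ${\rm Hol}$. For this I would construct sections path-by-path: choose a normal chart $U$ around a given $g_0\in G$ and a smooth family of paths $p_h\colon[0,1]\to G$ with $p_h(0)=e$, $p_h(1)=h$, depending smoothly on $h\in U$, each of which is \emph{constant near $0$ and near $1$}. The logarithmic derivative $a_h(s)=\dot{p}_h(s)p_h(s)^{-1}$ then extends smoothly by zero to a loop in $\mathfrak{g}$, defining an element $\sigma(h)\in L\mathfrak{g}^*$ whose holonomy is exactly $h$. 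Smoothness in $h$ for the appropriate Sobolev topology follows from smooth dependence of the ODE on parameters.

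The main obstacle I anticipate is ensuring that all of the above is carried out at the stated Sobolev regularity $\alpha>1/2$, so that the solutions of the parallel-transport ODE, the transition from $(LG)^*$-valued connections to paths in $G$, and the local-section construction all make sense and vary smoothly in the infinite-dimensional Banach/Hilbert manifold structures on $LG$ and $L\mathfrak{g}^*$. Modulo these analytic checks (which are standard and treated for instance in Pressley--Segal and in \cite{A-M-M}), the ODE computation itself is elementary, and the four conditions above together give the desired principal $\Omega G$-bundle structure with projection ${\rm Hol}$.
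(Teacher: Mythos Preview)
The paper does not supply its own proof of this lemma; it is quoted from \cite{A-M-M} and only the equivariance identity ${\rm Hol}(\gamma\cdot A)=\Ad_{\gamma(0)}{\rm Hol}(A)$ is recorded afterward, without derivation. Your proposal is the standard argument and is correct: the gauge-transformation formula for parallel transport gives equivariance, and then invariance, freeness, fiberwise transitivity, surjectivity, and local sections follow exactly as you outline.

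One minor point of convention: the paper writes the parallel-transport ODE with the \emph{left} logarithmic derivative, ${\rm Hol}_s(A)^{-1}\tfrac{d}{ds}{\rm Hol}_s(A)=A(s)$, whereas you use the right one, $\dot P_A P_A^{-1}=a$. Your choice is the one that meshes directly with the gauge action $\gamma\cdot A=\Ad_\gamma A+(d\gamma)\gamma^{-1}$ and makes the formula $Q(s)=\gamma(s)P_A(s)\gamma(0)^{-1}$ work; with the paper's convention the transformed solution takes a slightly different form, but the conclusion ${\rm Hol}(\gamma\cdot A)=\Ad_{\gamma(0)}{\rm Hol}(A)$ and hence the rest of your argument are unchanged.
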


An equivalent, analytic description of the holonomy map is often useful. We identity $\mathbb{S}^1\cong \R/\Z$ and denote by $s\in\R$ the local coordinate.
Let ${\rm Hol}_s(A):[0,1]\rightarrow G$ be the solution to the ODE \cite{A-M-M}:
\begin{equation}\label{eq:ODE_Hol}
	{\rm Hol}_s(A)^{-1}\frac{d}{d s} {\rm Hol}_s(A) = A(s),
\end{equation}
with ${\rm Hol}_0(A) =e$. Then $\rm Hol: L\g^* \rightarrow G$ is given by $\rm Hol(A) = \rm Hol_1(A)$.

A crucial property of the holonomy map is its compatibility with the $LG$-action on $L\g^*$. Specifically, for any $\gamma\in LG$, and $ A \in L\mathfrak{g}^*$, we have:
\begin{equation*}
	{\rm Hol}(\gamma \cdot A) = \Ad_{\gamma(0)}{\rm Hol}(A),
\end{equation*}
where $\gamma \cdot A$ denotes the gauge action defined in Equation \eqref{eq: LH_on_(Lh)*}. Since $\Omega G$ is a normal subgroup of $LG$, we take an $\Omega G$-quotient to the $LG$ action on $L\g^*$.  The induced action on the quotient space $L\g^*/\Omega G\cong G$  coincides with the conjugation action of 
$G$ on itself. 
\subsubsection{The inversion map}
Next, we recall the notion of inversion maps, and summarize the compatibility properties between the holonomy map  and the inversion map. Let $I:[0,1]\rightarrow [0,1]$ denote the map $I(s) = 1-s.$
\begin{defn}
	The \textbf{inversion map} ${\rm inv}: L\g^* \rightarrow L\g^*$  is defined by $${\rm inv}(A)(s) = -A(I(s))=-A(1-s),\quad \forall A\in L\g^*,\quad s\in [0,1].$$
\end{defn}


\begin{lemma}\label{lem: invA}
For any $A\in L\g^*$, the holonomy of ${\rm inv}(A)$ satisfies
$${\rm Hol}_s({\rm inv}(A)) = {\rm Hol}_1(A)^{-1}{\rm Hol}_{1-s}(A).$$ In particular, ${\rm Hol}({\rm inv}(A))={\rm Hol}(A)^{-1}$.
\end{lemma}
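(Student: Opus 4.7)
The plan is to prove the identity by recognizing both sides as solutions to the same initial value problem defining the holonomy of $\mathrm{inv}(A)$, and then invoking uniqueness for the ODE \eqref{eq:ODE_Hol}. Concretely, I will set
\[
h(s) := {\rm Hol}_1(A)^{-1}\,{\rm Hol}_{1-s}(A),
\]
and verify that $h$ satisfies the defining equation ${\rm Hol}_s({\rm inv}(A))^{-1}\tfrac{d}{ds}{\rm Hol}_s({\rm inv}(A)) = {\rm inv}(A)(s) = -A(1-s)$ with initial value $e$.

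First I check the initial condition: $h(0) = {\rm Hol}_1(A)^{-1}{\rm Hol}_1(A) = e$, matching ${\rm Hol}_0({\rm inv}(A)) = e$. Next I compute the logarithmic derivative. Rewriting the ODE \eqref{eq:ODE_Hol} as $\tfrac{d}{ds}{\rm Hol}_s(A) = {\rm Hol}_s(A)\,A(s)$ and applying the chain rule with $s \mapsto 1-s$, I obtain
\[
\tfrac{d}{ds}{\rm Hol}_{1-s}(A) = -{\rm Hol}_{1-s}(A)\,A(1-s).
\]
Since left multiplication by the constant ${\rm Hol}_1(A)^{-1}$ commutes with $\tfrac{d}{ds}$, I get $\tfrac{d}{ds}h(s) = -h(s)\,A(1-s)$, hence
\[
h(s)^{-1}\tfrac{d}{ds}h(s) = -A(1-s) = {\rm inv}(A)(s),
\]
which is exactly the ODE satisfied by ${\rm Hol}_s({\rm inv}(A))$. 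By uniqueness of solutions to \eqref{eq:ODE_Hol}, $h(s) = {\rm Hol}_s({\rm inv}(A))$, which is the desired formula. The ``in particular'' statement is then immediate by setting $s=1$, giving ${\rm Hol}({\rm inv}(A)) = {\rm Hol}_1(A)^{-1}{\rm Hol}_0(A) = {\rm Hol}(A)^{-1}$.

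There is no real obstacle here; the only mildly delicate point is the sign coming from the chain rule, and keeping track of left versus right multiplication when differentiating an expression of the form $g^{-1}\cdot g(s)$. Everything else is a direct application of uniqueness for the holonomy ODE \eqref{eq:ODE_Hol}.
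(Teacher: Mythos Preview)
Your proof is correct and follows essentially the same approach as the paper: both define $H(s)={\rm Hol}_1(A)^{-1}{\rm Hol}_{1-s}(A)$, verify the initial condition $H(0)=e$, and check directly that $H(s)^{-1}\tfrac{d}{ds}H(s)=-A(1-s)={\rm inv}(A)(s)$, concluding by uniqueness of solutions to the holonomy ODE~\eqref{eq:ODE_Hol}.
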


\begin{proof}

It suffices to check that $H(s):={\rm Hol}_1(A)^{-1}{\rm Hol}_{1-s}(A)$ satisfies the ODE defined in Equation \eqref{eq:ODE_Hol}. Through direct computation, we obtain
\begin{align*}
& H(s)^{-1}\frac{d}{ds}H(s)\\
=&[{\rm Hol}_1(A)^{-1}{\rm Hol}_{1-s}(A)]^{-1}\frac{d}{dt}|_{t=s}[{\rm Hol}_1(A)^{-1}{\rm Hol}_{1-t}(A)]\\
=& -[{\rm Hol}_1(A)^{-1}{\rm Hol}_{1-s}(A)]^{-1}{\rm Hol}_{1}(A)^{-1}\frac{d}{dt}|_{t=1-s}{\rm Hol}_{t}(A)\\ 
=& -{\rm Hol}_{1-s}(A)^{-1}\frac{d}{dt}|_{t=1-s}{\rm Hol}_{t}(A)\\ 
=& -A(1-s) = {\rm inv}(A)(s).
\end{align*}

\end{proof}

\begin{lemma}\label{lem:inversion-action}
	For any $\gamma\in LG$ and $A \in L\mathfrak{g}^*$, in view of the action defined by Equation \eqref{eq: LH_on_(Lh)*}, we have
	\begin{equation*}
		\gamma \cdot {\rm inv}(A) = {\rm inv}((I^*\gamma)\cdot A).
	\end{equation*}
Here, $I^*\gamma\in LG$ denotes the pullback of 
$\gamma$ by $I$, defined by $I^*\gamma(s)=\gamma(I(s))=\gamma(1-s).$
\end{lemma}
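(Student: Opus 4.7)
The plan is to verify the identity by a direct pointwise computation, evaluating both sides as elements of $L\g^*$ at a fixed parameter $s\in[0,1]$ and matching them using the explicit form of the gauge action in Equation \eqref{eq: LH_on_(Lh)*} and the definition of ${\rm inv}$. Throughout I view $A\in L\g^*$ as a $\g$-valued function on $\mathbb{S}^1$ (i.e.\ the coefficient of $ds$), so that $(\gamma\cdot A)(s)=\Ad_{\gamma(s)}A(s)+\gamma'(s)\gamma(s)^{-1}$.

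First I unpack the left-hand side. Using the gauge action formula together with ${\rm inv}(A)(s)=-A(1-s)$, I obtain
\[
(\gamma\cdot{\rm inv}(A))(s)
=\Ad_{\gamma(s)}\,{\rm inv}(A)(s)+\gamma'(s)\gamma(s)^{-1}
=-\Ad_{\gamma(s)}A(1-s)+\gamma'(s)\gamma(s)^{-1}.
\]

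Next I unpack the right-hand side. Since $(I^*\gamma)(t)=\gamma(1-t)$, the chain rule gives $\tfrac{d}{dt}(I^*\gamma)(t)=-\gamma'(1-t)$, so at a parameter $t$
\[
((I^*\gamma)\cdot A)(t)
=\Ad_{\gamma(1-t)}A(t)-\gamma'(1-t)\gamma(1-t)^{-1}.
\]
Applying ${\rm inv}$, that is, substituting $t=1-s$ and negating, yields
\[
{\rm inv}((I^*\gamma)\cdot A)(s)
=-((I^*\gamma)\cdot A)(1-s)
=-\Ad_{\gamma(s)}A(1-s)+\gamma'(s)\gamma(s)^{-1},
\]
which coincides with the expression above, completing the verification.

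There is essentially no conceptual obstacle; the one point requiring attention is bookkeeping of signs. The minus sign introduced by the chain rule when differentiating $\gamma(1-t)$ must conspire with the minus sign in the definition of ${\rm inv}$ so that both the adjoint term and the Maurer--Cartan correction $\gamma'\gamma^{-1}$ end up with matching signs on the two sides. If one prefers a coordinate-free formulation, the same identity follows by noting that ${\rm inv}$ equals the pullback by the orientation-reversing diffeomorphism $I$ (up to the built-in minus sign in the definition) and invoking naturality of the gauge action under reparametrization; the two viewpoints give the same cancellation.
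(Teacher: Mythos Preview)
Your proof is correct and is essentially the same as the paper's: both compute each side directly from the gauge action formula and the definition of ${\rm inv}$, with the chain rule producing the extra sign on the Maurer--Cartan term. The only difference is notational---the paper writes the computation using the pullback operator $I^*$ (with ${\rm inv}(A)=-I^*A$ and $\dot\gamma\gamma^{-1}=-I^*\big((\tfrac{d}{dt}I^*\gamma)(I^*\gamma)^{-1}\big)$), whereas you evaluate everything pointwise at a parameter $s$.
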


\begin{proof}
By the chain rule, we have $\frac{d}{dt}I^*\gamma = -I^*\frac{d}{dt}\gamma$, and thus $(\frac{d}{dt}I^*\gamma)(I^*\gamma)^{-1} = -I^*(\dot{\gamma}\gamma^{-1})$. Since $I^2$ is the identity map, we get that $\dot{\gamma}\gamma^{-1} = -I^*(\frac{d}{dt}I^*\gamma)(I^*\gamma)^{-1}$. By definition of ${\rm inv}(A) =-I^*A $, we have
	\begin{equation*}
		\begin{aligned}
			&\gamma \cdot {\rm inv}(A) =  \gamma \cdot (-I^*A) 
			= \Ad_{\gamma}(-I^*A)+\dot{\gamma}{\gamma}^{-1}.
		\end{aligned}
	\end{equation*}
On the other hand,
\begin{align*}
 {\rm inv}((I^*\gamma)\cdot A)
 &= 			 {\rm inv}(\Ad_{I^*\gamma}A + (\frac{d}{dt}I^*\gamma)(I^*\gamma)^{-1})\\
 &= -   \Ad_{\gamma}I^*A - I^* (\frac{d}{dt}I^*\gamma)(I^*\gamma)^{-1}\\
 &= \Ad_{\gamma}(-I^*A)+\dot{\gamma}{\gamma}^{-1}.
\end{align*}
Thus the result follows.
\end{proof}

\subsection{The relation of $\omega_{LG\times L\mathfrak{g}^*}$ and $\omega_D$}

Now we establish the following identity.

\begin{thm}\label{thm:for-equiv}
The forms $\omega_{LG\times L\g*}$ and $\omega_D$ are related by
\begin{equation}\label{eq:form-rel}
\omega_{LG\times L\g*}=f^*\omega_D-\Phi^*(\varpi,\varpi).
\end{equation}
Here, the projection $			f:LG \times L\g^* \rightarrow G\times G$ is given by
\[
f(\gamma,A)= (\gamma(0), \rm{Hol}(A)).
\]
The map $\Phi: LG \times L\g^* \rightarrow  L\g^* \times L\g^*$, is given by 
\begin{equation}\label{Eqt:Psi-momentmap}
	\Phi(\gamma,A)=(\gamma\cdot A, {\rm inv}(A)).
\end{equation}
And the 2-form $\varpi\in \Omega^2(L\g^*)$ is given by:
\begin{equation}\label{Eqt:varpi}
\varpi:=\frac{1}{2}\int_0^1({\rm Hol}_s^*\bar{\theta}, \frac{d}{ds}{\rm Hol}_s^*\bar{\theta})ds.
\end{equation}
\end{thm}

\begin{remark}
Equation \eqref{eq:form-rel} above is first stated in \cite{Xu} without a proof. We carry out a detailed proof here. The map $f$ can be interpreted as a Morita morphism of Lie groupoids. Further, $\Phi$ serve as the moment map.  The equation above can be interpreted as the equivalence of a loop group-Hamiltonian space and a quasi-Hamiltonian space, see \cite{A-M-M}, where the form $\varpi$ is also introduced.
\end{remark}

We need an $LG\times LG$-action on $LG\times L\g^*$.

\begin{lemma}\label{prop:LHH-act}
There is an $LG\times LG$-action on $LG\times L\mathfrak{g}^*$, given by
\begin{equation}\label{eq:LHH_act_on_grpd}
		(g_1,g_2)\cdot (\gamma,A) = (g_1 \gamma (I^*g_2)^{-1}, (I^*g_2)\cdot A),
	\end{equation}
    where $g_1,g_2,\gamma \in LG$ and $A\in L\g^*.$ Indeed, this action is induced from the gauge action.
\end{lemma}

\begin{proof}
Following \cite{Xu}, we first recall a diffeomorphism
			\begin{align*}
			F:LG \times L\g^* & \rightarrow D_{\mu_D}\times_{\rm{Hol}} (L\g^* \times L\g^*),\\
			(\gamma,A) &\mapsto \big((\gamma(0),{\rm Hol}(A)),(\gamma \cdot A, {\rm inv}(A))\big).
		\end{align*}

Following the proof strategy of Theorem 8.3 in \cite{A-M-M},  we consider an action of $LG \times LG$ on $D_{\mu_D}\times_{\rm{Hol}}(L\g^*\times L\g^*)$ induced by the gauge action, which is given by 
	\begin{equation*}
		\begin{aligned}
			&(g_1,g_2)\cdot \big((u_1,u_2),(A_1,A_2)\big) \\= &\big((g_1(0)u_1g_2(0),\Ad_{g_2(0)}u_2),(g_1\cdot A_1, g_2 \cdot A_2)\big).   
		\end{aligned}
	\end{equation*}
	For any element $(\gamma,A) \in LG \times L\g^*$, according to Lemma \ref{lem:inversion-action}, we have
		\begin{align*}
			&(g_1,g_2)\cdot F(\gamma,A) \\
			=& (g_1,g_2)\cdot  \big((\gamma(0),{\rm Hol}(A)),(\gamma \cdot A, {\rm inv}(A))\big)\\
			=& \big((u_1,u_2),((g_1\gamma)\cdot A, g_2 \cdot {\rm inv}(A))\big)\\
			=& \big((u_1,u_2),((g_1\gamma(I^*g_2)^{-1})\cdot ((I^*g_2)\cdot A), {\rm inv}((I^*g_2)\cdot A))\big)\\
			=& F(g_1 \gamma (I^*g_2)^{-1}, (I^*g_2)\cdot A),
		\end{align*}
	where $u_1 = g_1(0)\gamma(0)g_2(0)^{-1}$ and $ u_2=g_2(0){\rm Hol}(A)g_2(0)^{-1}$. From this computation, we conclude that the 
$LG \times LG$-action on $LG\times L\g^*$ induced by the above fiber product action coincides with the one given in Equation \eqref{eq:LHH_act_on_grpd}.
\end{proof}

Let $\rho: L\g \times L\g \rightarrow \mathfrak{X}(LG \times L\g^*)$ be the infinitesimal action associated to the Hamiltonian $(LG\times LG)$-action defined by Equation \eqref{eq:LHH_act_on_grpd}. For any $(u,v)\in L\g\times L\g$, by Definition 8.2 of \cite{A-M-M}, we have the moment map condition 
\begin{equation}\label{eq:moment-cond}
	i_{\rho(u,v)}\sigma = d\int_{\mathbb{S}^1} (\Phi, (u,v)).
\end{equation}

\begin{lemma}\label{lem:1st-comp}
	For any $v\in  L\g$, we have  
    \begin{equation}\label{Eqt:omega=sigam}
    i_{\rho(v,0)}\omega_{LG\times L\g^*} = i_{\rho(v,0)}\sigma.\end{equation}
\end{lemma}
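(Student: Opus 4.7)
The plan is to show that both $i_{\rho(v,0)}\omega_{LG\times L\g^*}$ and $i_{\rho(v,0)}\sigma$ coincide with the same exact 1-form $d\int_{\mathbb{S}^1}(\gamma\cdot A, v)$ on $LG\times L\g^*$, and hence with each other.

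First I compute the infinitesimal action. Setting $g_1=\exp(-tv)$ and $g_2=e$ in \eqref{eq:LHH_act_on_grpd}, the action of $(v,0)$ reduces to left-multiplication on the first factor with $A$ unchanged, so in the left-translation trivialization I obtain $\rho(v,0)|_{(\gamma,A)}=(-\Ad_{\gamma^{-1}}v,\,0)$. For the right-hand side of \eqref{Eqt:omega=sigam}, the moment map condition \eqref{eq:moment-cond} gives $i_{\rho(v,0)}\sigma = d\int_{\mathbb{S}^1}(\Phi,(v,0))$, and since $\Phi(\gamma,A)=(\gamma\cdot A,{\rm inv}(A))$, pairing with $(v,0)$ picks out only the first component, so this simplifies to $d\int_{\mathbb{S}^1}(\gamma\cdot A,v)$.

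For the left-hand side I observe that $(LG\times L\g^*,\omega_{LG\times L\g^*})$ is itself a Hamiltonian $LG$-space under the left-multiplication action on the first factor, with moment map the groupoid target map $\mathbf{t}(\gamma,A)=\gamma\cdot A$; this is the loop-group analog of Example \ref{Eg:Hamiltonian-space}. Concretely, plugging $(v_1,A_1)=(-\Ad_{\gamma^{-1}}v,\,0)$ into the explicit formula \eqref{eq:O_Lh-def} and applying the $\Ad$-invariance of $(\cdot,\cdot)$ recombines the resulting integrand into $\bigl(\Ad_\gamma(A_2+[v_2,A]+dv_2),\,v\bigr)$, which is exactly the integrand of $d\int_{\mathbb{S}^1}(\gamma\cdot A,v)$ obtained by differentiating the gauge action formula \eqref{eq: LH_on_(Lh)*} along a tangent vector $(v_2,A_2)$.

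The only real work is this invariance-of-pairing manipulation inside \eqref{eq:O_Lh-def}, and it is entirely routine with no infinite-dimensional subtleties. Notably, none of the more delicate ingredients of Section \ref{Sec:loop-case} (the 2-form $\varpi$ or the inversion map) enter here, precisely because the $(v,0)$-action leaves $A$ untouched and $\Phi$ contributes only through its first component.
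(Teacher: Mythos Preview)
Your proposal is correct and follows essentially the same approach as the paper: both compute $\rho(v,0)=(-\Ad_{\gamma^{-1}}v,0)$, invoke the moment map condition \eqref{eq:moment-cond} to reduce the right-hand side to $d\int_{\mathbb{S}^1}(\gamma\cdot A,v)$, and then verify by direct substitution into \eqref{eq:O_Lh-def} that the left-hand side agrees with this differential. The only cosmetic difference is that you differentiate the gauge-action formula directly along an exponential curve, whereas the paper represents the tangent vector by a general family $\beta(s,t)$ and invokes the Maurer--Cartan-type identity $\partial_t(\partial_s\beta\,\beta^{-1})-\partial_s(\partial_t\beta\,\beta^{-1})=[\partial_s\beta\,\beta^{-1},\partial_t\beta\,\beta^{-1}]$ to reach the same term $\Ad_\gamma dv_2$; the content is identical.
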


\begin{proof}
	For any $(\gamma,A) \in LG\times L\g^*$,  
	by Equation \eqref{eq:LHH_act_on_grpd}, we can derive the following:
	\begin{align*}
		\rho(v,0)_{(\gamma,A)}&=\frac{d}{dt}|_{t=0}(\exp(-tv),1)\cdot (\gamma,A)\\
		&=(\frac{d}{dt}|_{t=0} \gamma\gamma^{-1}\exp(-tv)\gamma,0)\\
		&=(-(L_\gamma)_*\Ad_{\gamma^{-1}}v,0).
	\end{align*}
	
	For the left hand side of Equation \eqref{Eqt:omega=sigam}, for $(u,B) \in L\g \times L\g^* \cong T_{(\gamma,A)}LG\times L\g^*$, using Equation \eqref{eq:O_Lh-def}  we compute as follows:
	\begin{equation*}
		\begin{aligned}
			&\omega_{LG\times L\g^*}(\rho(v,0),(u,B))_{(\gamma,A)}\\
			=& \omega_{LG\times L\g^*}((-\Ad_{\gamma^{-1}}v,0),(u,B))_{(\gamma,A)}\\
			=& \int_{\mathbb{S}^1} (\Ad_{\gamma^{-1}}v,B)+\int_{\mathbb{S}^1}(A,[\Ad_{\gamma^{-1}}v,u])+\lambda(\Ad_{\gamma^{-1}}v,u).
		\end{aligned}
	\end{equation*}
	And for the right hand side of Equation \eqref{Eqt:omega=sigam}, we have by Equation \eqref{eq:moment-cond} that
	\begin{equation*}
		\begin{aligned}
			i_{\rho(v,0)}\sigma
			=d\int_{\mathbb{S}^1}(\Phi,(v,0)).
		\end{aligned}
	\end{equation*}
    
	Suppose that $\beta(s,t)$ is a smooth family of curves in $G$ such that $\beta(s,0)\equiv e$ and that $\frac{\partial}{\partial t}|_{t=0} \beta(s,t)= u(s)$, then we may use the curve $(\gamma(s)\beta(s,t), A + tB)$ to represent the tangent vector $(u,B)$. 
	
	Now by pairing $(u,B)$ with $i_{\rho(v,0)}\sigma$, we get that
	\begin{equation*}
		\begin{aligned}
			&\sigma(\rho(v,0),(u,B))_{(\gamma,A)}\\
			=& \int_{\mathbb{S}^1} \frac{\partial}{\partial t}|_{t=0}\big(\Phi(\gamma\beta, A+tB),(v,0)\big)\\
			=& \int_{\mathbb{S}^1} \frac{\partial}{\partial t}|_{t=0}\big(\Ad_{\gamma\beta}(A + tB)+ \frac{\partial(\gamma\beta)}{\partial s}\beta^{-1}\gamma^{-1}ds,v\big) \qquad \mbox{by Equation \eqref{Eqt:Psi-momentmap}}\\
			=&  \int_{\mathbb{S}^1} (\Ad_{\gamma}[u,A],v) +\int_{\mathbb{S}^1} (\Ad_\gamma B,v)+ \int_{\mathbb{S}^1}\big(\frac{\partial}{\partial t}|_{t=0}(\frac{\partial(\gamma\beta)}{\partial s}\beta^{-1}\gamma^{-1}),v\big)ds.
		\end{aligned}
	\end{equation*}
	For the last term, we have that
	\begin{equation*}
		\begin{aligned}
			& \frac{\partial}{\partial t}|_{t=0}(\frac{\partial(\gamma\beta)}{\partial s}\beta^{-1}\gamma^{-1})\\
			=&\frac{\partial}{\partial t}|_{t=0}(\gamma \frac{\partial\beta}{\partial s}\beta^{-1}\gamma^{-1})\\
			=& \Ad_\gamma\frac{\partial}{\partial t}|_{t=0}(\frac{\partial\beta}{\partial s}\beta^{-1}).
		\end{aligned}
	\end{equation*}
	And finally we recall the identity (see e.g. \cites{MR1738431,crainic-fernandes})
	\begin{equation*}
		\frac{\partial}{\partial t}((\frac{\partial}{\partial s}\beta)\beta^{-1}) -  \frac{\partial}{\partial s}((\frac{\partial}{\partial t}\beta)\beta^{-1}) = [(\frac{\partial}{\partial s}\beta)\beta^{-1}, (\frac{\partial}{\partial t}\beta)\beta^{-1}]. 
	\end{equation*}
	Since $\beta(s,0)\equiv e$, we have that $\frac{\partial\beta}{\partial s}|_{(s,0)}\equiv 0$. Therefore,
	\begin{equation*}
		\frac{\partial}{\partial t}|_{t=0}((\frac{\partial}{\partial s}\beta)\beta^{-1}) = \frac{\partial}{\partial s}((\frac{\partial}{\partial t}|_{t=0}\beta)\beta(s,0)^{-1}) = \frac{\partial}{\partial s}u.
	\end{equation*}
	Since the inner product $(\cdot,\cdot)$ is $\Ad$-invariant, it follows that
	\begin{equation*}
		\begin{aligned}
			&\sigma(\rho(v,0),(u,B))_{(\gamma,A)}\\
			=&\int_{\mathbb{S}^1} (\Ad_{\gamma}[u,A],v) +\int_{\mathbb{S}^1} (\Ad_\gamma B,v)+ \int_{\mathbb{S}^1}(\Ad_{\gamma} \frac{\partial}{\partial s}u, v) ds\\
			=& \omega_{LG\times L\g^*} (\rho(v,0),(u,B))_{(\gamma,A)}.
		\end{aligned}
	\end{equation*}
	
\end{proof}

To proceed, we need a technical lemma.
\begin{lemma}\label{lem: inv_minus}
	We have that ${\rm inv}^*\varpi = -\varpi$.
\end{lemma}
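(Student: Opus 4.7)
The plan is to reduce the claim to a direct computation using Lemma \ref{lem: invA} together with the Leibniz rule for the pullback of the right-invariant Maurer--Cartan form $\bar\theta$. Write $g_s := {\rm Hol}_s \colon L\mathfrak g^* \to G$ and $\alpha_s := g_s^*\bar\theta$, so that
$$\varpi = \tfrac{1}{2}\int_0^1(\alpha_s,\dot\alpha_s)\,ds, \qquad \dot\alpha_s := \tfrac{d}{ds}\alpha_s,$$
and ${\rm inv}^*\varpi = \tfrac{1}{2}\int_0^1((g_s\circ{\rm inv})^*\bar\theta,\, \tfrac{d}{ds}(g_s\circ{\rm inv})^*\bar\theta)\,ds$. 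Everything reduces to understanding $(g_s\circ{\rm inv})^*\bar\theta$.

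First, I would apply Lemma \ref{lem: invA}, which gives $g_s\circ{\rm inv} = g_1^{-1}\cdot g_{1-s}$ as a pointwise product of maps $L\mathfrak g^* \to G$. Using the Leibniz rule
$$(\phi_1\phi_2)^*\bar\theta = \phi_1^*\bar\theta + \Ad_{\phi_1}\phi_2^*\bar\theta,$$
and the identity $(g_1^{-1})^*\bar\theta = -\Ad_{g_1^{-1}} g_1^*\bar\theta$ (which follows from the Leibniz rule applied to the constant map $g_1\cdot g_1^{-1} = e$), I would derive
$$(g_s\circ{\rm inv})^*\bar\theta = \Ad_{g_1^{-1}}(\alpha_{1-s} - \alpha_1),$$
and differentiating in $s$,
$$\tfrac{d}{ds}(g_s\circ{\rm inv})^*\bar\theta = -\Ad_{g_1^{-1}}\dot\alpha_{1-s}.$$

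Substituting into the formula for ${\rm inv}^*\varpi$ and using $\Ad$-invariance of the bilinear form $(\cdot,\cdot)$, the integrand collapses to
$$-(\alpha_{1-s},\dot\alpha_{1-s}) + (\alpha_1,\dot\alpha_{1-s}).$$
The change of variable $u = 1-s$ sends the first term to $-\tfrac{1}{2}\int_0^1(\alpha_u,\dot\alpha_u)\,du = -\varpi$. The second term equals $\tfrac{1}{2}(\alpha_1,\alpha_1-\alpha_0)$ after the same substitution and integration in $u$; but $\alpha_0 = 0$ since $g_0\equiv e$, and $(\alpha_1,\alpha_1) = 0$ because $(\cdot,\cdot)$ is symmetric (for a $\mathfrak g$-valued 1-form $\alpha$, the 2-form $(\alpha,\alpha)$ evaluated on $(X,Y)$ is $(\alpha(X),\alpha(Y)) - (\alpha(Y),\alpha(X)) = 0$). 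Hence ${\rm inv}^*\varpi = -\varpi$.

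There is no substantive obstacle: the only delicate step is the careful application of the Leibniz rule with the factor $g_1^{-1}$ depending on the base point in $L\mathfrak g^*$, and keeping track of the sign from $\tfrac{d}{ds}f(1-s) = -\dot f(1-s)$ under the change of variable.
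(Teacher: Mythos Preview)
Your proof is correct and follows essentially the same approach as the paper: both use Lemma~\ref{lem: invA} to obtain the key identity $(g_s\circ{\rm inv})^*\bar\theta = \Ad_{g_1^{-1}}(\alpha_{1-s}-\alpha_1)$, then invoke $\Ad$-invariance of $(\cdot,\cdot)$, the vanishing of $\alpha_0$ and of $(\alpha_1,\alpha_1)$, and a change of variable $u=1-s$ to conclude. The only cosmetic difference is that you package the computation of the pullback via the Leibniz rule for $(\phi_1\phi_2)^*\bar\theta$, whereas the paper evaluates directly on a tangent vector represented by a curve; the resulting formula and the remaining steps are identical.
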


\begin{proof}
	Let $V$ be a tangent vector at $A \in L\g^*$, which is in particular an element in $L\g^*$. Suppose $A_t$ is a curve in $L\g^*$ representing $V$. 
	According to Lemma  \ref{lem: invA} and the right invariance of $\bar{\theta}$, we have
	\begin{equation*}
		\begin{aligned}
			 ({\rm inv}^*{\rm Hol}_s^*\bar{\theta})(V)
			&= \bar{\theta}( \frac{d}{dt}|_{t=0} {\rm Hol}_s ({\rm inv}(A_t)))   \\
            &=\bar{\theta}( \frac{d}{dt}|_{t=0}  ({\rm Hol}_1(A_t)^{-1}{\rm Hol}_{1-s}(A_t)  )\\
			&= \bar{\theta}(\frac{d}{dt}|_{t=0} {\rm Hol}_1(A_t)^{-1} {\rm Hol}_{1-s}(A ))\\
			&\quad +\bar{\theta}( \frac{d}{dt}|_{t=0} {\rm Hol}_1(A)^{-1} {\rm Hol}_{1-s}(A_t ) )
		\end{aligned}
	\end{equation*}
For the first term, we use that $$(\frac{d}{dt}|_{t=0}{\rm Hol}_1(A_t)^{-1}){\rm Hol}_1(A) = - {\rm Hol}_1(A)^{-1}\frac{d}{dt}|_{t=0}{\rm Hol}_1(A_t).$$
By the right-invariant of $\bar{\theta}$, we have
\begin{equation*}
\begin{aligned}
    &\bar{\theta}(\frac{d}{dt}|_{t=0} {\rm Hol}_1(A_t)^{-1} {\rm Hol}_{1-s}(A ))\\
    =&-\bar{\theta} ({\rm Hol}_1(A)^{-1}\frac{d}{dt}|_{t=0}{\rm Hol}_1(A_t) )\\
    =& -\bar{\theta}( {\rm Hol}_1(A)^{-1}(\frac{d}{dt}|_{t=0}{\rm Hol}_1(A_t){\rm Hol}_1(A)^{-1}){\rm Hol}_1(A) )\\=& -\Ad_{{\rm Hol}(A)^{-1}} \overline{\theta}({\rm Hol}_* V).
\end{aligned}
\end{equation*}
For the second term, we compute that
\begin{equation*}
\begin{aligned}
&\bar{\theta}( \frac{d}{dt}|_{t=0} {\rm Hol}_1(A)^{-1} {\rm Hol}_{1-s}(A_t ) )\\
=&\bar{\theta}(\frac{d}{dt}|_{t=0} {\rm Hol}_1(A)^{-1} {\rm Hol}_{1-s}(A_t ){\rm Hol}_{1-s}(A )^{-1}{\rm Hol}_1(A) )\\
=& {\rm Ad}_{{\rm Hol_1(A)^{-1}}}\overline{\theta}({\rm Hol}_{1-s}^*V).   
\end{aligned}
\end{equation*}
    
	Hence we have
	$${\rm inv}^*{\rm Hol}_s^*\bar{\theta}= -\Ad_{{\rm Hol}(A)^{-1}} {\rm Hol}^*\bar{\theta}+\Ad_{{\rm Hol}(A)^{-1}}{\rm Hol}_{1-s}^*\bar{\theta}.$$
	
	Note that ${\rm Hol}_0^*\bar{\theta}=0$ and $({\rm Hol}^*\bar{\theta},{\rm Hol}^*\bar{\theta}) = 0. $ By the fundamental theorem of calculus, we have that $$\int_0^1({\rm Hol}^*\bar{\theta}, \frac{\partial}{\partial s}{\rm Hol}^*\bar{\theta}) = 0.$$
    
	Then it follows from the definition of $\varpi$ and $\Ad$-invariance of $(\cdot, \cdot)$ that
	\begin{equation*}
		\begin{aligned}
			{\rm inv}^*\varpi
			&=\frac{1}{2}\int_0^1({\rm inv}^*{\rm Hol}_s^*\bar{\theta}, \frac{\partial}{\partial s}{\rm inv}^*{\rm Hol}_s^*\bar{\theta})ds\\
			&=\frac{1}{2} \int_0^1(-{\rm Hol}^*\bar{\theta}+{\rm Hol}_{1-s}^*\bar{\theta}, \frac{\partial}{\partial s}{\rm Hol}_{1-s}^*\bar{\theta})ds\\
			&=\frac{1}{2} \int_0^1({\rm Hol}_{1-s}^*\bar{\theta}, \frac{\partial}{\partial s}{\rm Hol}_{1-s}^*\bar{\theta})ds\\
			&= -\varpi.
		\end{aligned}
	\end{equation*}
	
\end{proof}

In order to show Theorem \ref{thm:for-equiv}, we still need to check the following.

\begin{lemma}\label{lem:2nd-comp}
	For any $W_1,W_2 \in L\g^*$, we have that
	$$
	\omega_{LG\times L\g^*}((0,W_1),(0,W_2))=\sigma((0,W_1),(0,W_2))=0.
	$$
\end{lemma}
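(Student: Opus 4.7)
The plan is to reduce both vanishing statements to computations on $L\mathfrak{g}^*$ alone, using the embedding $\iota_\gamma\colon L\mathfrak{g}^*\hookrightarrow LG\times L\mathfrak{g}^*$, $A\mapsto(\gamma,A)$, whose differential identifies $(0,W)$ with $\iota_{\gamma*}W$. The vanishing of $\omega_{LG\times L\mathfrak{g}^*}$ on such pairs is immediate from formula \eqref{eq:O_Lh-def}, since every summand carries a first-component entry $v_i=0$. For $\sigma=f^*\omega_D-\Phi^*(\varpi,\varpi)$ I handle the two pieces separately: the composition $f\circ\iota_\gamma\colon A\mapsto(\gamma(0),{\rm Hol}(A))$ is constant in its first $G$-factor, and every summand of $\omega_D$ in \eqref{eq:O_D-def} contains a factor of $p_1^*\theta$, so $\iota_\gamma^*f^*\omega_D=0$.

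For the remaining piece, the composition $\Phi\circ\iota_\gamma=(L_\gamma,{\rm inv})$, where $L_\gamma\colon A\mapsto\gamma\cdot A$ denotes the gauge-action map, so
\[
\iota_\gamma^*\Phi^*(\varpi,\varpi)=L_\gamma^*\varpi+{\rm inv}^*\varpi.
\]
Lemma~\ref{lem: inv_minus} supplies ${\rm inv}^*\varpi=-\varpi$, reducing the claim to the gauge invariance $L_\gamma^*\varpi=\varpi$.

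To establish this invariance I plan to use the transformation rule for partial holonomy, schematically of the form ${\rm Hol}_s(\gamma\cdot A)=\gamma(0)\,{\rm Hol}_s(A)\,\gamma(s)^{-1}$. The decisive feature is that the left factor $\gamma(0)$ is $s$-independent, while the right factor is annihilated by the right-invariant Maurer--Cartan form. Differentiating in $A$ and applying $\bar\theta$ then gives $L_\gamma^*{\rm Hol}_s^*\bar\theta=\Ad_{\gamma(0)}({\rm Hol}_s^*\bar\theta)$. Since $\Ad_{\gamma(0)}$ commutes with $\partial_s$ and the pairing $(\cdot,\cdot)$ is $\Ad$-invariant, the integrand in \eqref{Eqt:varpi} is unchanged under $L_\gamma^*$, so $L_\gamma^*\varpi=\varpi$ and hence $\iota_\gamma^*\sigma=0$. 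The main obstacle I anticipate is rigorously establishing the partial-holonomy transformation rule with the paper's sign and ordering conventions for the gauge action in \eqref{eq: LH_on_(Lh)*}; once the $s$-independence of the left-hand factor is secured, the remainder is a short $\Ad$-invariance manipulation of the same flavor as the proof of Lemma~\ref{lem: inv_minus}.
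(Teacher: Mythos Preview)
Your approach is correct and essentially identical to the paper's: both split $\sigma$ into the $f^*\omega_D$ and $\Phi^*(\varpi,\varpi)$ pieces, dispatch the first by noting that the first $G$-coordinate of $f\circ\iota_\gamma$ is constant, and reduce the second via Lemma~\ref{lem: inv_minus} to the gauge invariance $L_\gamma^*\varpi=\varpi$, which follows from $L_\gamma^*{\rm Hol}_s^*\bar\theta=\Ad_{\gamma(0)}{\rm Hol}_s^*\bar\theta$ and $\Ad$-invariance of $(\cdot,\cdot)$. The only cosmetic difference is that the paper cites Lemma~A.1 of \cite{A-M-M} for this last identity rather than deriving it from the partial-holonomy transformation rule ${\rm Hol}_s(\gamma\cdot A)=\gamma(0)\,{\rm Hol}_s(A)\,\gamma(s)^{-1}$ as you propose.
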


\begin{proof}
	It is clear that $\omega_{LG\times L\g^*}((0,W_1),(0,W_2))=0.$ Also, we compute by Equation \eqref{eq:O_D-def} that $$f^* \omega_D\big((0,W_1),(0,W_2)\big)_{(h,\eta)} = \omega_D\big((0,{\rm Hol}_*W_1),(0,{\rm Hol}_*W_2)\big)_{(h(0),Hol(\eta))}=0.$$

	Moreover, denote by $\varrho(h): L\g^* \rightarrow L\g^*$ to be the action map for Equation \eqref{eq: LH_on_(Lh)*}, then we have by Lemma \ref{lem: inv_minus} that
	\begin{equation*}
		\begin{aligned}
			&\Phi^*(\varpi,\varpi)((0,W_1),(0,W_2))_{(h,\eta)}\\
			=& {\rm inv}^*\varpi(W_1,W_2) + \varrho(h)^*\varpi(W_1,W_2)\\
			=& -\varpi(W_1,W_2) + \varrho(h)^*\varpi(W_1,W_2)
		\end{aligned}
	\end{equation*}
	By Lemma A.1 of \cite{A-M-M}, we have
	\begin{equation}
		\varrho(h)^* {\rm Hol}^*_s \bar{\theta}=\Ad_{h(0)} {\rm Hol}^*_s \bar{\theta}.
	\end{equation}
	By the $\Ad$-invariance of $(\cdot,\cdot)$ we have that
	\begin{equation*}
		\begin{aligned}
			\varrho(h)^*\varpi& = \int_0^1(\varrho(h)^*{\rm Hol}_s^*\bar{\theta}, \varrho(h)^*\frac{d}{ds}{\rm Hol}_s^*\bar{\theta})ds \\
			& = \int_0^1(\Ad_{h(0)}{\rm Hol}_s^*\bar{\theta}, \Ad_{h(0)}\frac{d}{ds}{\rm Hol}_s^*\bar{\theta})ds = \varpi.
		\end{aligned}
	\end{equation*}
	
	Therefore we get that $\Phi^*(\varpi,\varpi)=0$.
\end{proof}

It is clear that Theorem \ref{thm:for-equiv} follows from Lemma \ref{lem:1st-comp} and Lemma \ref{lem:2nd-comp}.

\subsection{Identification of cohomology classes}
Now we derive some useful consequences of Theorem \ref{thm:for-equiv}. 
\begin{propn}\label{prop: Morita-grpd}\cite{Xu}
	The projection map 
	\begin{equation*}
		\begin{aligned}
			f:LG \times L\g^* &\rightarrow G\times G,\\
			(\gamma,A)&\mapsto (\gamma(0), \rm{Hol}(A)),
		\end{aligned}
	\end{equation*}
	is a Morita morphism of Lie groupoids.
\end{propn}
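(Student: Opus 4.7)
The plan is to verify the two defining conditions of a Morita morphism: the base map ${\rm Hol}\colon L\g^*\to G$ is a surjective submersion, and the groupoid morphism $f$ identifies $LG\times L\g^*$ with the pullback groupoid $f^!(G\times G)$. First, $f$ is a Lie groupoid morphism over ${\rm Hol}$: source compatibility is immediate from the definition; target compatibility is precisely the holonomy transformation law ${\rm Hol}(\gamma\cdot A)=\Ad_{\gamma(0)}{\rm Hol}(A)$ recalled just before the statement; and compatibility with multiplication reduces to the pointwise identity $(\gamma_1\gamma_2)(0)=\gamma_1(0)\gamma_2(0)$.

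For the surjective-submersion property of ${\rm Hol}$, the submersion part is immediate from the principal $\Omega G$-bundle assertion recorded in the preceding lemma. Surjectivity uses connectedness of $G$: given $g\in G$, pick a smooth path $\eta\colon[0,1]\to G$ from $e$ to $g$ whose derivatives vanish at the endpoints (so that $A:=\eta^{-1}\eta'\,ds$ extends to an element of $L\g^*$), and note that ${\rm Hol}(A)=g$ directly from the defining ODE \eqref{eq:ODE_Hol}.

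The heart of the proof is to show that the natural smooth map
\[
\Phi\colon LG\times L\g^*\longrightarrow f^!(G\times G),\qquad (\gamma,A)\longmapsto \bigl(\gamma\cdot A,(\gamma(0),{\rm Hol}(A)),A\bigr),
\]
is a diffeomorphism. I would exhibit an inverse explicitly: given a compatible triple $(A_1,(x,y),A_2)$ with ${\rm Hol}(A_1)=xyx^{-1}$ and ${\rm Hol}(A_2)=y$, the conditions $A_1=\gamma\cdot A_2$ and $\gamma(0)=x$ are equivalent to the initial-value problem
\[
\gamma'(s)\gamma(s)^{-1}=A_1(s)-\Ad_{\gamma(s)}A_2(s),\qquad \gamma(0)=x,
\]
on $[0,1]$, which has a unique solution depending smoothly on the data. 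The one nontrivial point is that $\gamma$ closes into a loop, i.e.\ $\gamma(1)=x$. This follows from the path version of the holonomy transformation law (obtained by the same variation-of-parameters computation that yields the loop formula of Step 1): evaluated at $s=1$ and combined with ${\rm Hol}(A_1)=xyx^{-1}$, ${\rm Hol}(A_2)=y$, $\gamma(0)=x$, it forces $\gamma(1)\cdot y=x\cdot y$, hence $\gamma(1)=x$. Smoothness of $\Phi^{-1}$ then follows from smooth dependence of ODE solutions on parameters.

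The principal obstacle lies in the last step, not in the algebraic construction---which is forced by inverting the gauge action---but in the infinite-dimensional regularity: one must ensure the ODE solution for $\gamma$ genuinely lies in the function space used to define $LG$ and that $\Phi^{-1}$ is smooth in the corresponding topology. Once this functional-analytic point is settled, the two conditions combine to give the desired Morita morphism.
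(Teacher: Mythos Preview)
The paper does not give its own proof of this proposition: it is stated with a citation to \cite{Xu} and no argument is supplied. Your proposal is therefore not comparable to a proof in the paper, but it is a correct and natural direct verification of the Morita conditions; the explicit ODE inverse for the pullback-groupoid identification is the expected construction, and your flagged regularity issue is the only genuine subtlety.
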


We first recall the following technical lemma.
\begin{lemma}[\cite{A-M-M}]\label{lem:AMM}
For the 2-form $\varpi\in\Omega^2(L\g^*)$ defined by Equation \eqref{Eqt:varpi}, 	we have that $$d\varpi = - \rm{Hol}^*\Omega,$$ where ${\rm Hol}\colon L\g^*\to G$ is the holonomy map and  $\Omega\in \Omega^3(G)$ is the invariant 3-form.    
\end{lemma}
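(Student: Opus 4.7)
The plan is to reduce the statement to a direct computation using the Maurer-Cartan equation for $\bar\theta$. Set $\alpha_s := {\rm Hol}_s^*\bar\theta \in \Omega^1(L\g^*;\g)$, so that $\alpha_0=0$ (since ${\rm Hol}_0$ is the constant map to $e$), $\alpha_1 = {\rm Hol}^*\bar\theta$, and
\[
\varpi = \tfrac{1}{2}\int_0^1 (\alpha_s, \partial_s\alpha_s)\, ds.
\]
Exterior differentiation on $L\g^*$ commutes with both $\int_0^1 ds$ and $\partial_s$, and the pulled-back Maurer-Cartan equation gives $d\alpha_s = \tfrac{1}{2}[\alpha_s,\alpha_s]$; differentiating this identity in $s$ yields $d(\partial_s\alpha_s) = [\partial_s\alpha_s,\alpha_s]$.

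First I would expand
\[
d\varpi = \tfrac{1}{2}\int_0^1\big((d\alpha_s,\partial_s\alpha_s) - (\alpha_s,d\partial_s\alpha_s)\big)\,ds
\]
and substitute. Using the Koszul sign convention $(\beta,\gamma) = (-1)^{pq}(\gamma,\beta)$ for the pairing of $\g$-valued forms together with the $\Ad$-invariance of $(\cdot,\cdot)$, one checks that the three $3$-forms $([\alpha_s,\alpha_s],\partial_s\alpha_s)$, $(\alpha_s,[\partial_s\alpha_s,\alpha_s])$, and $(\partial_s\alpha_s,[\alpha_s,\alpha_s])$ all coincide on $L\g^*$. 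Collecting coefficients yields
\[
d\varpi = -\tfrac{1}{4}\int_0^1 (\partial_s\alpha_s,[\alpha_s,\alpha_s])\,ds.
\]

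Second, I would recognize the integrand as a total $s$-derivative. By $\Ad$-invariance again, $\tfrac{d}{ds}(\alpha_s,[\alpha_s,\alpha_s]) = 3(\partial_s\alpha_s,[\alpha_s,\alpha_s])$, so the fundamental theorem of calculus combined with $\alpha_0 = 0$ gives $\int_0^1(\partial_s\alpha_s,[\alpha_s,\alpha_s])\,ds = \tfrac{1}{3}(\alpha_1,[\alpha_1,\alpha_1])$. Combining with the previous display and using bi-invariance of $(\cdot,\cdot)$ (which yields $(\theta,[\theta,\theta]) = (\bar\theta,[\bar\theta,\bar\theta])$, so $\Omega = \tfrac{1}{12}(\bar\theta,[\bar\theta,\bar\theta])$) produces
\[
d\varpi = -\tfrac{1}{12}(\alpha_1,[\alpha_1,\alpha_1]) = -{\rm Hol}^*\Omega,
\]
as claimed.

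The main obstacle I anticipate is careful bookkeeping of signs for $\g$-valued forms: one must be consistent about the Koszul convention, about the symmetry $[\beta,\gamma] = [\gamma,\beta]$ for two $\g$-valued $1$-forms (which forces $\tfrac{d}{ds}[\alpha_s,\alpha_s] = 2[\partial_s\alpha_s,\alpha_s]$ rather than zero), and about the cyclic rewriting of invariant trilinear expressions via $([X,Y],Z) = (X,[Y,Z])$. No genuine analytic difficulty arises once these conventions are fixed; the argument is a purely symbolic manipulation anchored by the Maurer-Cartan equation, $\Ad$-invariance, and the boundary condition $\alpha_0 = 0$.
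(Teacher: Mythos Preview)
Your argument is correct: the Maurer--Cartan identity for $\bar\theta$, the Leibniz rule for the pairing of $\g$-valued forms, and the total-$s$-derivative step combine exactly as you describe to give $d\varpi = -\tfrac{1}{12}{\rm Hol}^*(\bar\theta,[\bar\theta,\bar\theta]) = -{\rm Hol}^*\Omega$. The paper does not supply its own proof of this lemma; it simply cites \cite{A-M-M} (where this is Proposition~8.1), and the computation there is essentially the one you have written out.
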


Combining Theorem \ref{thm:for-equiv} and Lemma \ref{lem:AMM}, we get the following result, which states that the Morita morphism $f$ sends the groupoid cohomology class $[\omega_D +\Omega ]$ to $[\omega_{LG\times L\g^*}]$. The formula is first stated in \cite{B-X-Z} without a proof.

\begin{propn}\label{prop:formula-Xu}
	Let $\delta$ be the de Rham differential on the groupoid $LG \times L\g^*$. We have that
	\begin{equation}\label{eq:formula-Xu}
		\delta\varpi 
		=\omega_{LG\times L\g^*}-f^*(\omega_D+\Omega).
	\end{equation}
\end{propn}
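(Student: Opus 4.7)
The plan is to decompose both sides of the claimed identity according to the bidegree in the total de Rham complex of $LG \times L\g^* \rightrightarrows L\g^*$, and to reduce the statement to the three ingredients already in hand: Lemma \ref{lem: inv_minus}, Lemma \ref{lem:AMM}, and Part (2) of Theorem \ref{thm:for-equiv}. Since $\varpi$ sits in bidegree $(0,2)$, the sign convention from Section \ref{Sec:preliminary} gives
\[
\delta\varpi \;=\; \partial\varpi + d\varpi, \qquad \partial\varpi = \mathbf{s}^*\varpi - \mathbf{t}^*\varpi \in \Omega^2(LG\times L\g^*), \qquad d\varpi \in \Omega^3(L\g^*).
\]
Correspondingly, the right-hand side splits as a 2-form on the arrow space ($\omega_{LG\times L\g^*} - f^*\omega_D$) plus a 3-form on the base ($-\mathrm{Hol}^*\Omega$, since the base component of the Morita morphism $f$ is precisely the holonomy map). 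It therefore suffices to verify the two identities
\[
\partial\varpi = \omega_{LG\times L\g^*} - f^*\omega_D, \qquad d\varpi = -\mathrm{Hol}^*\Omega
\]
separately and then add them.

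The second identity is exactly the statement of Lemma \ref{lem:AMM}, so nothing more needs to be done there. For the first, the key observation is that $\partial\varpi$ can be rewritten as a pullback along the moment map $\Phi$. Indeed, the source and target of $LG\times L\g^*$ are $\mathbf{s}(\gamma,A) = A$ and $\mathbf{t}(\gamma,A) = \gamma\cdot A$, so the explicit formula \eqref{Eqt:Psi-momentmap} for $\Phi$ yields
\[
\mathrm{pr}_1 \circ \Phi = \mathbf{t}, \qquad \mathrm{pr}_2 \circ \Phi = \mathrm{inv}\circ \mathbf{s}.
\]
Hence the product form $(\varpi,\varpi) = \mathrm{pr}_1^*\varpi + \mathrm{pr}_2^*\varpi$ pulls back via $\Phi$ to
\[
\Phi^*(\varpi,\varpi) \;=\; \mathbf{t}^*\varpi + \mathbf{s}^*(\mathrm{inv}^*\varpi) \;=\; \mathbf{t}^*\varpi - \mathbf{s}^*\varpi,
\]
where the second equality uses Lemma \ref{lem: inv_minus}. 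Therefore $\partial\varpi = -\Phi^*(\varpi,\varpi)$, and substituting this into the relation $\omega_{LG\times L\g^*} = f^*\omega_D - \Phi^*(\varpi,\varpi)$ from Theorem \ref{thm:for-equiv}(2) yields precisely $\partial\varpi = \omega_{LG\times L\g^*} - f^*\omega_D$.

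Assembling the two components gives the claimed formula. The argument is largely a bookkeeping exercise: the genuine analytic input (the behavior of $\varpi$ under inversion and the holonomy-theoretic identity $d\varpi = -\mathrm{Hol}^*\Omega$) is already packaged in the cited lemmas, and the equivalence theorem supplies the rest. The only place where care is needed is the sign matching: one must verify that the sign conventions for $\partial$ on $\mathcal{G}_1$ (namely $\mathbf{s}^* - \mathbf{t}^*$) line up with the decomposition of $\Phi$ into source/target data, so that $\partial\varpi$ and $\Phi^*(\varpi,\varpi)$ differ by exactly a sign. This is the step I would double-check most carefully, but it is otherwise a routine verification.
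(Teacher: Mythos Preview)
Your proposal is correct and matches the paper's own proof essentially step for step: both compute $\delta\varpi = (\mathbf{s}^*\varpi - \mathbf{t}^*\varpi) + d\varpi$, rewrite the simplicial part as $-\Phi^*(\varpi,\varpi)$ via Lemma~\ref{lem: inv_minus}, invoke Theorem~\ref{thm:for-equiv}(2) for the $2$-form piece, and Lemma~\ref{lem:AMM} for the $3$-form piece. Your explicit bidegree framing is slightly more systematic in presentation, but the argument is the same.
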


\begin{proof}
First, recall that the Morita morphism $f\colon LG\times L\g^*\to G\times G$ is compatible with the base manifold map ${\rm Hol}\colon L\g^*\to G$. Then it follows that $$f^*(\omega_D+\Omega)=f^*\omega_D+{\rm Hol}^*\Omega.$$
Next, by the definition of the groupoid de Rham differential $\delta$, we have that $$\delta\varpi = \mathbf{s}^*\varpi-\mathbf{t}^*\varpi+ d\varpi.$$ We may write the moment map $\Phi$ as $\Phi = (\mathbf{t}, {\rm inv}\circ\mathbf{s})$. Then by Lemma \ref{lem: inv_minus} we compute that
	$$
	\Phi^*(\varpi,\varpi) = \mathbf{t}^*\varpi + (\rm{inv}\circ\mathbf{s})^*\varpi = \mathbf{t}^*\varpi - \mathbf{s}^*\varpi.
	$$
By Theorem \ref{thm:for-equiv}, the symplectic form $\omega_{LG\times L\g^*}$ satisfies the identity:
	$f^*\omega_D-\Phi^*(\varpi,\varpi) = \omega_{LG\times L\g^*} $. Therefore, by the above equations and Lemma \ref{lem:AMM}, 
		\begin{align*}
			\delta \varpi = -\Phi^*(\varpi,\varpi)+d\varpi &= \omega_{LG\times L\g^*}-f^*\omega_D - \rm{Hol}^*\Omega\\
            &=\omega_{LG\times L\g^*}-f^*(\omega_D+\Omega).
		\end{align*}
\end{proof}

\begin{remark}
The above result finds applications in the study of Morita equivalence of quasi-symplectic groupoids \cite{Xu,A-M_24}, prequantization \cite{Krepski}, and the unversal characteristic class of 2-group bundles, see Example 4.28 of \cite{mathieupaper}.
\end{remark}

\begin{remark}
When $G$ is compact, connected, and simply connected, the above result admit an elegant interpretation through the lens of moduli space of flat connections, see \cite{A-M-M, meinrenken1996symplecticproof}. See \cite{A-M_08} for the framework of possible extension to general cases. 
\end{remark}

\section{An $\mathbb{S}^1$-gerbe over the stack $[G/G]$}
In this section, we discuss the stack-theoretic interpretation of our previous results, connecting the constructed symplectic groupoids and their extensions to equivariant cohomology classes. We begin by reviewing the definitions of stacks and gerbes, and then provide detailed explanations of how the $\mathbb{S}^1$-extension of the groupoid $LG\times L\g^*\rightrightarrows L\g^*$ leads to the construction of an $\mathbb{S}^1$-gerbe over the differentiable stack  $[G/G]$.

\subsection{$\mathbb{S}^1$-gerbes and Dixmier-Douady classes}\label{Sec:5.1}

The notion of stacks is usually defined in an abstract way. For our context, we focus on the differentiable stacks, introduced by Behrend and Xu in \cite{stacks_B-X}.  Indeed,  \textbf{differentiable stacks} are Lie groupoids up to Morita equivalence. For any Lie groupoid $\mathcal{G}$, we use $[\mathcal{G}]$ to denote the corresponding differentiable stack. Due to conventional usage in the literature, we adopt the notation $[G/G]$ for the differentiable stack $[G\times G]$. For any sheaf $\mathcal{F}$ and any Lie groupoid $\mathcal{G}$, the sheaf cohomology of Lie groupoids ${\rm H}^\bullet(\mathcal{G},\mathcal{F})$ is invariant under Morita equivalence. Thus, we take the cohomology group ${\rm H}^\bullet(\mathcal{G},\mathcal{F})$ as the \textbf{sheaf cohomology of differentiable stack [$\mathcal{G}$]}, denoted as ${\rm H}^\bullet([\mathcal{G}],\mathcal{F})$.

An \textbf{$\mathbb{S}^1$-central extension} of a Lie groupoid $\mathcal{G}\rightrightarrows M$ is a groupoid $\widetilde{\mathcal{G}}\rightrightarrows M$ fitting in a short exact sequence of Lie groupoids
\begin{equation*}
     \mathbb{S}^1\times M\xrightarrow[]{\widetilde{i}}\widetilde{\mathcal{G}} \xrightarrow[]{\widetilde{p}}\mathcal{G},
\end{equation*}
where $\widetilde{i}$,  $\widetilde{p}$ are  injective, surjective groupoid morphisms over $\operatorname{id}_M$, respectively, and for any $\widetilde{g}\in \widetilde{\mathcal{G}} , r \in \mathbb{S}^1$, we have that $\widetilde{i}(\mathbf{t}(\widetilde{g}),r)\cdot \widetilde{g} =\widetilde{g} \cdot  \widetilde{i}(\mathbf{s}(\widetilde{g}),r).$

According to \cite{stacks_B-X}, an \textbf{$\mathbb{S}^1$-gerbe} over a differentiable stack $[\mathcal{G}]$ can be viewed as an  $\mathbb{S}^1$-central extensions of Lie groupoids $\widetilde{\mathcal{G}}\rightarrow \mathcal{G}$, up to Morita morphism of central extensions.  We shall use $[\widetilde{\mathcal{G}}\rightarrow \mathcal{G}]$ to denote this $\mathbb{S}^1$-gerbe.

\begin{example}
Given a central extension of Lie groups $1\rightarrow \mathbb{S}^1 \rightarrow \widetilde{G} \rightarrow G \rightarrow 1$, we have that $\widetilde{G}\times \g^*\rightarrow G\times \g^*$ is an $\mathbb{S}^1$-central extension of Lie groupoids. Therefore, $[\widetilde{G}\times \g^*\rightarrow G\times \g^*]$ is an $\mathbb{S}^1$-gerbe over the stack $[G\times \g^*]$. 
\end{example}

Indeed, $\mathbb{S}^1$-central extensions give meanings to certain cohomology classes, due to the following deep result of Giraud.

\begin{thm}[\cite{Giraud}]
There is a one-to-one correspondence between the isomorphism classes of $\mathbb{S}^1$-gerbes over a differentiable stack $[\mathcal{G}]$, and the cohomology group ${\rm H}^2([\mathcal{G}], \mathbb{S}^1)$.
\end{thm}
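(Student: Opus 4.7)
The plan is to construct an explicit bijection between isomorphism classes of $\mathbb{S}^1$-gerbes over $[\mathcal{G}]$ and elements of ${\rm H}^2([\mathcal{G}], \mathbb{S}^1)$, using the Čech--simplicial description of stack cohomology combined with descent for principal bundles. The sheaf cohomology ${\rm H}^\bullet([\mathcal{G}], \underline{\mathbb{S}^1})$, where $\underline{\mathbb{S}^1}$ denotes the sheaf of smooth $\mathbb{S}^1$-valued functions, is computed by the Čech double complex attached to the simplicial manifold $\mathcal{G}_\bullet$ together with an open cover of each $\mathcal{G}_p$. Accordingly, a degree-two class is represented by Čech data encoding a principal $\mathbb{S}^1$-bundle $P\to \mathcal{G}$ together with a trivialization over $\mathcal{G}_2$ of the comparison between $\mathbf{m}^*P$ and ${\rm pr}_1^*P\otimes {\rm pr}_2^*P$, subject to a cocycle condition over $\mathcal{G}_3$.

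The forward direction starts from an $\mathbb{S}^1$-central extension $\widetilde{p}:\widetilde{\mathcal{G}}\to \mathcal{G}$. First I would observe that $\widetilde{p}$ is automatically a principal $\mathbb{S}^1$-bundle, so picking local sections $\sigma_i:U_i\to \widetilde{\mathcal{G}}$ over a sufficiently fine open cover $\{U_i\}$ of $\mathcal{G}$ produces $\mathbb{S}^1$-valued transition functions on double overlaps. Multiplicativity of $\widetilde{\mathcal{G}}$ then yields, on each composable triple of local charts, a comparison between $\sigma_i(g)\cdot \sigma_j(h)$ and the lift of $gh$, i.e.\ an $\mathbb{S}^1$-valued function over the corresponding overlap in $\mathcal{G}_2$. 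Centrality ensures these functions are well-defined and independent of fiber arguments, and associativity translates verbatim into the 2-cocycle equation in the Čech--simplicial total complex, yielding a class in ${\rm H}^2([\mathcal{G}], \mathbb{S}^1)$ that is independent of the choice of sections.

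For the reverse direction, given such a 2-cocycle $c$ over an open cover, I would reconstruct $\widetilde{\mathcal{G}}$ by gluing local pieces $U_i\times \mathbb{S}^1$ via the transition-function part of $c$ and endowing the result with a twisted groupoid multiplication determined by the $\mathcal{G}_2$-component of $c$; the cocycle relation is exactly the associativity of this multiplication, and the $\mathbb{S}^1$-factor is central by construction. Routine verification shows that cohomologous cocycles yield isomorphic central extensions, and that different choices of local sections in the forward construction alter the cocycle only by a coboundary, so the two constructions descend to mutually inverse maps at the level of isomorphism classes.

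The main obstacle is reconciling the two natural notions of equivalence: Morita equivalence of $\mathbb{S}^1$-central extensions on the geometric side versus cohomology in the total Čech--simplicial complex on the algebraic side. A Morita morphism of central extensions may factor through pullback along a surjective submersion rather than a mere refinement of cover, so one cannot directly compare cocycles on the two sides. The fix is to use hypercovers and the Morita invariance of sheaf cohomology (as already invoked in Section~\ref{Sec:preliminary}) to show that any such Morita morphism induces a coboundary change of cocycle, while preserving $\mathbb{S}^1$-equivariance of all descent data; once this compatibility is established, the bijection claimed in the theorem follows.
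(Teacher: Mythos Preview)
The paper does not prove this theorem: it is quoted as a classical result of Giraud (and, in the differentiable-stack setting, of Behrend--Xu \cite{stacks_B-X}), with no argument supplied. There is therefore no ``paper's own proof'' to compare your proposal against.

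That said, your outline is the standard one and is essentially correct. The forward map (extract a Čech--simplicial $2$-cocycle from local sections of the principal $\mathbb{S}^1$-bundle $\widetilde{\mathcal{G}}\to\mathcal{G}$, using the groupoid multiplication to produce the $\mathcal{G}_2$-component) and the inverse map (glue $U_i\times\mathbb{S}^1$ and twist the multiplication by the cocycle) are exactly how Behrend--Xu set things up. You have also correctly located the one genuine subtlety: on the gerbe side the equivalence relation is generated by Morita morphisms of central extensions, not just by refinement of covers, so one must check that pulling back along a Morita morphism changes the cocycle by a coboundary in the total complex. Your proposed fix---pass to hypercovers and invoke the Morita invariance of sheaf cohomology already recorded in Section~\ref{Sec:preliminary}---is the right one, and is precisely what \cite{stacks_B-X} does. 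One small caution: you should also verify that the class you build is invariant not merely under refinement/hypercover but under the full zig-zag of Morita morphisms defining equivalence of gerbes; this follows once you know that a Morita morphism of central extensions induces an isomorphism on ${\rm H}^2(-,\mathbb{S}^1)$ carrying one class to the other, which is again part of the Behrend--Xu package.
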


In particular, any central extension of Lie groupoids $\widetilde{\mathcal{G}}\rightarrow \mathcal{G}$ defines a class $[\widetilde{\mathcal{G}}\rightarrow \mathcal{G}] \in {\rm H}^2([\mathcal{G}], \mathbb{S}^1)$. Such classes can be related to higher degree cohomology. Consider the exponential sequence of sheaves $0\rightarrow \mathbb{Z} \rightarrow \Omega^0 \rightarrow \mathbb{S}^1 \rightarrow 0$. This sequence gives rise to a long exact sequence in cohomology, yielding a connecting morphism ${\rm H}^2([{\mathcal{G}}],\mathbb{S}^1) \rightarrow {\rm H}^3([{\mathcal{G}}],\mathbb{Z})$. 

\begin{defn}
Let $\widetilde{\mathcal{G}}\rightarrow \mathcal{G}$ be an $\mathbb{S}^1$-central extension of Lie groupoids. Under the connecting morphism ${\rm H}^2([{\mathcal{G}}],\mathbb{S}^1) \rightarrow {\rm H}^3([{\mathcal{G}}],\mathbb{Z})$, the image of the $\mathbb{S}^1$-gerbe $[\widetilde{\mathcal{G}}\rightarrow \mathcal{G}]$ in ${\rm H}^3([{\mathcal{G}}],\mathbb{Z})$ is called the \textbf{Dixmier-Douady class} of $[\widetilde{\mathcal{G}}\rightarrow \mathcal{G}]$.   
\end{defn}

\begin{remark}\label{Rem:AMM-gerbe}
Note that for a compact Lie group $G$, the AMM groupoid $D=G \times G \rightrightarrows G$ is a proper Lie groupoid, as the map $(s,t): D \to G \times G$ is proper. This property is significant for the theory of gerbes, as it ensures the isomorphism ${\rm H}^2([G/G], \mathbb{S}^1) \cong {\rm H}^3([G/G], \mathbb{Z})$, which means the Dixmier-Douady class truly classifies the $S^1$-gerbes over $D$ (up to Morita equivalence).
\end{remark}

Dixmier-Douady classes serve as higher analogues of Chern classes, representing a topological invariant that characterizes the extension.
Similar to how Chern classes are computed from curvature data, Dixmier-Douady classes for groupoid extensions can be related to curvature-like 3-forms. This essential connection is established by Behrend and Xu \cite{stacks_B-X}. For an $\mathbb{S}^1$-central extension of Lie groupoids $\widetilde{p}\colon\widetilde{\mathcal{G}} \rightarrow \mathcal{G}$, a \textbf{pseudo-connection} is defined as a pair $(\vartheta, B)$, where $\vartheta \in \Omega^1(\widetilde{\mathcal{G}})$ is a connection 1-form for the associated principal $\mathbb{S}^1$-bundle, and $B \in \Omega^2(M)$ is a 2-form on the base manifold. The \textbf{pseudo-curvature} of this pseudo-connection is a specific 3-form $\eta + \omega + \Omega$, where $\eta \in \Omega^1(\mathcal{G}_2)$, $\omega\in \Omega^2(\mathcal{G})$, and $\Omega \in \Omega^3(M)$ are derived from $\delta(\vartheta + B) = p^*(\eta + \omega + \Omega)\in C_{\rm dR}^3(\widetilde{\mathcal{G}})$. Behrend and Xu proved the following fundamental result relating pseudo-curvature to the Dixmier-Douady class:
\begin{thm}[\cite{stacks_B-X}]\label{thm: Behrend-Xu}
Given any pseudo-connection $(\theta, B)$ and its pseudo-curvature $\eta + \omega + \Omega$, the corresponding de Rham class $[\eta + \omega + \Omega] \in {\rm H}_{\rm dR}^3([\mathcal{G}])$ is independent of the choice of pseudo-connection. 

Under the canonical homomorphism ${\rm H}^3([\mathcal{G}],\mathbb{Z})\rightarrow  {\rm H}^3([\mathcal{G}], \mathbb{R}) = {\rm H}_{\rm dR}^3([\mathcal{G}])$, the Dixmier-Douady class of $[\widetilde{\mathcal{G}}\rightarrow \mathcal{G}]$ maps to $[\eta + \omega + \Omega]$. In particular, the class $[\eta + \omega + \Omega] \in {\rm H}_{\rm dR}^3([\mathcal{G}])$ is an integer class.
\end{thm}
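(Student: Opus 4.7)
The plan is to prove the two assertions separately: first the independence of the de Rham class from the choice of pseudo-connection, which is a direct cochain-level argument, and then the identification with the Dixmier--Douady class, for which I would pass through a Čech--de Rham double complex for the simplicial manifold $\mathcal{G}_\bullet$.

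For independence, let $(\vartheta', B')$ be a second pseudo-connection with pseudo-curvature $\eta' + \omega' + \Omega'$. Both $\vartheta$ and $\vartheta'$ are principal $\mathbb{S}^1$-connection $1$-forms on $\widetilde{p}\colon \widetilde{\mathcal{G}}\to \mathcal{G}$, so their difference is horizontal and $\mathbb{S}^1$-invariant; it therefore descends to a unique $\alpha\in\Omega^1(\mathcal{G})$ with $\widetilde{p}^*\alpha=\vartheta-\vartheta'$. Hence $(\vartheta+B)-(\vartheta'+B')=\widetilde{p}^*(\alpha+(B-B'))$ in $C^2_{\rm dR}(\widetilde{\mathcal{G}})$, and since $\widetilde{p}$ is a groupoid morphism the pullback commutes with the total differential $\delta$. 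Applying $\delta$ and comparing with the defining equation of pseudo-curvature yields
\[
\widetilde{p}^*\bigl((\eta+\omega+\Omega)-(\eta'+\omega'+\Omega')\bigr)=\widetilde{p}^*\delta\bigl(\alpha+(B-B')\bigr),
\]
and the injectivity of $\widetilde{p}^*$ on the total complex (since $\widetilde{p}$ is a surjective submersion which is the identity on objects) gives the coboundary relation $(\eta+\omega+\Omega)-(\eta'+\omega'+\Omega')=\delta(\alpha+(B-B'))$. The same injectivity, combined with $\delta^2(\vartheta+B)=0$, also shows that $\eta+\omega+\Omega$ is itself $\delta$-closed.

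For the identification with the Dixmier--Douady class, I would compare both sides inside the Čech--de Rham bicomplex of the simplicial manifold $\mathcal{G}_\bullet$. Pick a good open cover $\mathcal{U}$ of $M$ and trivialize the principal $\mathbb{S}^1$-bundle $\widetilde{\mathcal{G}}\to\mathcal{G}$ over the induced cover of $\mathcal{G}$; the resulting transition functions produce a Čech $2$-cocycle with values in $\mathbb{S}^1$ on $\mathcal{G}_2$ representing the gerbe class in ${\rm H}^2([\mathcal{G}],\mathbb{S}^1)$. The connecting homomorphism of $0\to\mathbb{Z}\to\mathbb{R}\to\mathbb{S}^1\to 0$ promotes this cocycle to a $\mathbb{Z}$-valued Čech $3$-cocycle representing the DD class. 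On the other hand, over each chart the pseudo-connection $(\vartheta, B)$ provides local $\mathbb{R}$-valued primitives (logarithms of local sections of $\widetilde{\mathcal{G}}$) whose successive differences in the Čech direction recover precisely this $\mathbb{R}$-lift. A diagonal descent in the bicomplex then produces a total de Rham cocycle on $\mathcal{G}_\bullet$ representing the image of the DD class in ${\rm H}^3_{\rm dR}([\mathcal{G}])$, and the very identities
\[
\widetilde{p}^*\eta={\rm pr}_1^*\vartheta+{\rm pr}_2^*\vartheta-\mathbf{m}^*\vartheta,\quad \widetilde{p}^*\omega=-d\vartheta+\mathbf{s}^*B-\mathbf{t}^*B,\quad \Omega=dB
\]
used to define the pseudo-curvature identify this total cocycle with $\eta+\omega+\Omega$ term by term.

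The main obstacle is the second step: the zig-zag in the bicomplex must be executed while simultaneously tracking the sign convention $\delta=\partial+(-1)^p d$ of the total de Rham complex, the sign of the connecting homomorphism of the exponential sequence, and the fact that the de Rham complex of $\mathcal{G}_\bullet$ is a fine resolution of the constant sheaf $\mathbb{R}$, so that both edge maps from the bicomplex into the total cohomology agree. A convenient way to absorb these bookkeeping issues is to work with the hypercover attached to $\mathcal{U}$ and to invoke the Čech-to-sheaf spectral sequence at each simplicial level $\mathcal{G}_p$ before passing to the total complex. Once this is in place, the integrality of $[\eta+\omega+\Omega]$ is automatic, since the class is realized as the image of an integer class under ${\rm H}^3([\mathcal{G}],\mathbb{Z})\to{\rm H}^3_{\rm dR}([\mathcal{G}])$.
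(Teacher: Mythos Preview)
The paper does not give its own proof of this theorem: it is stated as a result of Behrend--Xu \cite{stacks_B-X} and is used as a black box in the subsequent argument (Proposition~\ref{cor: DD-class-omega} and the proof of Theorem~A). So there is no proof in the paper to compare your proposal against.

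That said, your sketch is the standard route and is essentially the argument one finds in \cite{stacks_B-X}. The independence step is clean and correct as you wrote it; your displayed identities for the components of the pseudo-curvature also match the paper's sign convention $\delta=\partial+(-1)^p d$. For the identification with the Dixmier--Douady class, the \v{C}ech--de~Rham zig-zag over a good cover of the simplicial manifold $\mathcal{G}_\bullet$ is exactly what Behrend--Xu do; your honest remark that the bookkeeping (signs, exponential sequence, fine resolution) is the real work is accurate. One small caution: in your independence argument you appeal to ``injectivity of $\widetilde{p}^*$ on the total complex''; this is fine degreewise because $\widetilde{p}$ is a surjective submersion and the identity on units, but you should note that you are using it on each $\Omega^q(\mathcal{G}_p)$ separately rather than on the total complex as a whole, since $\widetilde{p}$ is only defined at simplicial level $1$ and extends to the nerve via the induced maps $\widetilde{\mathcal{G}}_p\to\mathcal{G}_p$.
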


We may apply this framework to our construction. As established in Section \ref{Sec:Second-construction}, given an $\mathbb{S}^1$-central extension $\widetilde{G}\rightarrow G$, our constructed symplectic groupoid $(\Gamma,\omega_\Gamma)$ possesses a natural $\mathbb{S}^1$-central extension $\widetilde{G}\times \g^*\rightarrow \Gamma$, which can be viewed as a principal $\mathbb{S}^1$-bundle. The symplectic form $\omega_{\Gamma}$ itself naturally arises from the symplectic reduction of the minus canonical form $-\widetilde{\varphi}^*\widetilde{\omega}_{\rm can}$ on $\widetilde{\Gamma}:=\widetilde{G}\times\widetilde{\g}^*$. Let $\widetilde{j}: \widetilde{G}\times \g^*\rightarrow \widetilde{\Gamma}$ be the inclusion map and $\theta_L\in \Omega^1(\widetilde{\Gamma})$ be the canonical Liouville form. According to Proposition \ref{prop:rel_OG&OC}, it is clear that $$d(\widetilde{j}^*\widetilde{\varphi}^*\theta_L) = -\widetilde{j}^*\widetilde{\varphi}^*\widetilde{\omega}_{\rm can} = \widetilde{p}^*\omega_{\Gamma}.$$ Note that $\widetilde{j}^*\widetilde{\varphi}^*\theta_L$ is a connection 1-form for the principal $\mathbb{S}^1$-bundle $\widetilde{G}\times \g^*\rightarrow \Gamma$. Consequently, $\widetilde{j}^*\widetilde{\varphi}^*\theta_L$ is a pseudo-connection for our central extension, whose pseudo-curvature is precisely $\omega_{\Gamma}$.
Therefore, we conclude:
\begin{propn}\label{cor: DD-class-omega}
The Dixmier-Douady class of the $\mathbb{S}^1$-gerbe $[\widetilde{G}\times \g^*\rightarrow \Gamma]$ maps to the de Rham class $[\omega_{\Gamma}] \in {\rm H}_{\rm dR}^3([\Gamma])$ under the canonical homomorphism ${\rm H}^3([\Gamma], \mathbb{Z}) \rightarrow {\rm H}_{\rm dR}^3([\Gamma])$.
\end{propn}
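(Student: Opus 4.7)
The plan is to invoke the Behrend--Xu characterization (Theorem~\ref{thm: Behrend-Xu}) by exhibiting a concrete pseudo-connection whose pseudo-curvature is precisely $\omega_\Gamma$. The candidate I would propose is the pair $(\vartheta, B)$ with $\vartheta := \widetilde{j}^*\widetilde{\varphi}^*\theta_L \in \Omega^1(\widetilde{G}\times\mathfrak{g}^*)$ and $B=0 \in \Omega^2(\mathfrak{g}^*)$. With this choice, the image of the Dixmier--Douady class in $H^3_{\rm dR}([\Gamma])$ is represented by $\delta\vartheta \in C^3_{\rm dR}(\widetilde{G}\times \mathfrak{g}^*)$, and it suffices to show $\delta\vartheta = \widetilde{p}^*\omega_\Gamma$.

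First I would verify that $\vartheta$ is genuinely a connection 1-form for the principal $\mathbb{S}^1$-bundle $\widetilde{p}\colon\widetilde{G}\times\mathfrak{g}^*\to\Gamma$. In left-trivialized coordinates, the Liouville form reads $\vartheta_{(g,\xi)}(v,\alpha)=\langle\xi,v\rangle$ for $(v,\alpha)\in\widetilde{\mathfrak{g}}\times\widetilde{\mathfrak{g}}^*$. Since $\mathbb{S}^1\subset Z(\widetilde{G})$, the left action $(g,\xi)\mapsto (sg,\xi)$ preserves $\vartheta$, giving $\mathbb{S}^1$-invariance. Using the centrality of $r\in\mathbb{R}=\operatorname{Lie}(\mathbb{S}^1)$, the fundamental vertical field at a point $(g,(\eta,1))$ of $\widetilde{\mu}^{-1}(1)$ is $((0,r),0)$, and $\vartheta$ pairs it to $r$, which is the required normalization.

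Next I would compute the three components of $\delta\vartheta$ according to the bidegrees of the total de Rham complex. The $(1,2)$-component is $-d\vartheta$, which by the calculation already recorded before Proposition~\ref{cor: DD-class-omega} equals $-\widetilde{p}^*\omega_\Gamma$ (up to the overall sign fixed by the convention $\delta = \partial + (-1)^p d$); this matches the $\omega$-slot of the pseudo-curvature. The $(0,3)$-component vanishes because $B=0$. The essential point is the $(2,1)$-component $\partial\vartheta$, which I claim is zero. To prove this, I would extend $\vartheta$ to $\vartheta_{\rm big}:=\widetilde{\varphi}^*\theta_L$ on the full cotangent groupoid $T^*\widetilde{G}\cong\widetilde{G}\times\widetilde{\mathfrak{g}}^*$ and check multiplicativity directly in left-trivialization. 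Parametrizing composable pairs by $(g,h,\eta)$ with $\mathbf{m}_*(v,w,\alpha)=(\Ad_{h^{-1}}v+w,\alpha)$, ${\rm pr}_{1*}$ and ${\rm pr}_{2*}$ as computed in the proof of Proposition~\ref{prop: multiplicative}, one reads off directly
\[
({\rm pr}_1^*\vartheta_{\rm big}+{\rm pr}_2^*\vartheta_{\rm big}-\mathbf{m}^*\vartheta_{\rm big})(v,w,\alpha) = \langle\eta,\Ad_{h^{-1}}v\rangle+\langle\eta,w\rangle-\langle\eta,\Ad_{h^{-1}}v+w\rangle = 0.
\]
Since $\widetilde{j}\colon\widetilde{G}\times\mathfrak{g}^*\hookrightarrow \widetilde{G}\times\widetilde{\mathfrak{g}}^*$ is a groupoid morphism (the restricted groupoid is a subgroupoid of the cotangent groupoid), we get $\partial\vartheta = \widetilde{j}_2^*\,\partial\vartheta_{\rm big} = 0$.

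Combining the three computations, $\delta\vartheta = \widetilde{p}^*\omega_\Gamma$, i.e.\ the pseudo-curvature decomposes as $\eta+\omega+\Omega = 0+\omega_\Gamma+0$. Theorem~\ref{thm: Behrend-Xu} then yields that the image of the Dixmier--Douady class under $H^3([\Gamma],\mathbb{Z})\to H^3_{\rm dR}([\Gamma])$ is exactly $[\omega_\Gamma]$. The only nontrivial step is the vanishing of $\partial\vartheta$, and even this is essentially immediate once multiplicativity of the canonical Liouville form on the cotangent groupoid is recognized; the connection-form and de Rham-differential checks are routine.
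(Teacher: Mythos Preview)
Your proposal is correct and follows exactly the paper's approach: the same pseudo-connection $\vartheta=\widetilde{j}^*\widetilde{\varphi}^*\theta_L$ with $B=0$, the same identification $d\vartheta=\widetilde{p}^*\omega_\Gamma$ via Proposition~\ref{prop:rel_OG&OC}, and the same appeal to Theorem~\ref{thm: Behrend-Xu}. In fact you supply more detail than the paper, which merely asserts that $\vartheta$ is a connection $1$-form and that the pseudo-curvature is $\omega_\Gamma$; your explicit verification of $\partial\vartheta=0$ via multiplicativity of the Liouville form on the cotangent groupoid, and of the connection-form axioms, fills gaps the paper leaves implicit.
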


\subsection{Proof of the Main Theorem}\label{sec: ThmA}
Now we return to the main theme of this paper, namely, constructing the basic $\mathbb{S}^1$-gerbe over $[G/G]$. We shall construct explicitly cental extensions of Lie groupoids whose Dixmier-Douady classes coincides with the AMM class. Let $G$ be a compact, connected Lie group, with Lie algebra $\mathfrak{g}$.

The differentiable stack $[G/G]$ can be presented by both the action groupoids $G\times G$ and $LG \times L\mathfrak{g}^*.$ We shall pick $LG \times L\mathfrak{g}^*$ as the base for the central extensions, and the reason is as follows. The existence and classification of $\mathbb{S}^1$-central extensions for a Lie groupoid $\mathcal{G}_1\rightrightarrows \mathcal{G}_0$ are governed by a long exact sequence (Proposition 4.7 of \cite{stacks_B-X}):
\begin{align*}
{\rm H}^1(\mathcal{G}_0, \mathbb{S}^1)& \to \{ \mathbb{S}^1\text{-central~extensions ~of ~}\mathcal{G}_1\rightrightarrows \mathcal{G}_0\}\\
&\to {\rm H}^2([\mathcal{G}_1], \mathbb{S}^1)\to{\rm H}^2(\mathcal{G}_0, \mathbb{S}^1).
\end{align*}
In our case, we have $\mathcal{G}_0 = L\mathfrak{g}^*$. This choice is strategic because $L\mathfrak{g}^*$ is a contractible (infinite-dimensional) vector space, meaning its singular cohomology ${\rm H}^k(L\mathfrak{g}^*, \mathbb{Z})$ vanishes for $k>0$. This property simplifies the long exact sequence, ensuring that  any $\mathbb{S}^1$-gerbe over $[G/G]$ can be represented by a corresponding $\mathbb{S}^1$-central extension of a Lie groupoid based at $L\mathfrak{g}^*$.



The loop Lie algebra cocycle in Equation \eqref{eq:loopcocycle} gives rise to a Lie algebra structure on the extended loop algebra $\widetilde{L\g}: = L\mathfrak{g} \oplus \mathbb{R},$
\begin{equation*}
	[(v_1,r_1),(v_2,r_2)]=([v_1,v_2], \lambda(v_1,v_2)),
\end{equation*} 
where $(v_i,r_i)\in \widetilde{L\g}.$
Then it gives a central extension of Lie algebras:
\begin{equation*}
	0\rightarrow \mathbb{R}\rightarrow \widetilde{L\g} \rightarrow L\g\rightarrow 0. 
\end{equation*}
 If the central extension above to an $\mathbb{S}^1$-central extension of Lie groups
\begin{equation}\label{eq: Kac-Moody}
	1\rightarrow \mathbb{S}^1\rightarrow \widetilde{LG} \rightarrow LG\rightarrow 1,
\end{equation}
then this is called the Kac-Moody extension. From now on, we assume that $G$ is compact, connected, and the Kac-Moody extension for $LG$ exists (for example, if $G$ is simply connected \cite{P-Segal}). 

 By the discussions in Section \ref{Sec:Second-construction}, the space $\widetilde{LG}\times \widetilde{L\g^*}$ with the canonical 2-form, constitute a Hamiltonian $\mathbb{S}^1$-space, with moment map $\widetilde{\mu}$. Here, following \cite{meinrenken1996symplecticproof} we take the convention $\widetilde{L\g^*} = L\g^*\oplus \mathbb{R}$. It is then clear that $\widetilde{\mu}^{-1}(1) = \widetilde{LG}\times L\g^*$ and $\widetilde{\mu}^{-1}(1)/\mathbb{S}^1 = LG\times L\g^*$. We thus obtain an $\mathbb{S}^1$-central extension of Lie groupoids
    $$   \mathbb{S}^1 \times L\g^*\xrightarrow[]{\widetilde{i}}\widetilde{LG}\times L\g^*\xrightarrow[]{\widetilde{p}}LG\times L\g^*  ,$$
which represents an $\mathbb{S}^1$-gerbe $[\widetilde{LG}\times L\g^*\rightarrow LG\times L\g^*]$ over the stack $[LG\times L\g^*] = [G/G]$. 

To study the Dixmier-Douady class of $[\widetilde{LG}\times L\g^*\rightarrow LG\times L\g^*]$, we may again take the Liouville 1-form on $\widetilde{LG}\times \widetilde{L\g^*}$, which pullback to a connection 1-form on $\widetilde{LG}\times L\g^*$. By the discussion in Proposition \ref{cor: DD-class-omega}, we have that $\omega_{LG\times L\g^*}$ is the pseudo-curvature. We now conclude by Theorem \ref{thm: Behrend-Xu} and Proposition \ref{prop:formula-Xu} that

\begin{thm}
The $\mathbb{S}^1$-central extension $\widetilde{LG}\times L\g^*\rightarrow LG\times L\g^*$ defines an $\mathbb{S}^1$-gerbe $[\widetilde{LG}\times L\g^*\rightarrow LG\times L\g^*]$ over the stack $[G/G]$. Its Dixmier-Douady class has image $[\omega_{LG\times L\g^*}] = [\omega_D+\Omega] \in {\rm H}^3_{\rm dR}([G/G])$, under the canonical morphism ${\rm H}^3([G/G], \mathbb{Z}) \rightarrow {\rm H}_{\rm dR}^3([G/G])$.
\end{thm}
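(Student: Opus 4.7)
The plan is to synthesize three ingredients already in place: the symplectic-reduction construction of Section \ref{Sec:Second-construction} applied to the Kac-Moody extension, the pseudo-connection calculus of Behrend-Xu (Theorem \ref{thm: Behrend-Xu}), and the quasi-symplectic Morita equivalence of Theorem \ref{Thm:morita-equivalence}. Most of the work has already been done; what remains is to assemble these pieces and check that the finite-dimensional arguments transcribe to the loop setting.

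First, I would verify that the reduction construction of Section \ref{Sec:Second-construction} applies to the Kac-Moody extension \eqref{eq: Kac-Moody}. The trivialized cotangent bundle $\widetilde{LG}\times \widetilde{L\g^*}$ carries the canonical symplectic form and a Hamiltonian action by the central circle $\mathbb{S}^1\subset \widetilde{LG}$, with moment map $\widetilde{\mu}$ having $1$ as a regular value and $\widetilde{\mu}^{-1}(1)=\widetilde{LG}\times L\g^*$. The loop-group analogue of Proposition \ref{propn: Ham_space1} then yields an isomorphism of Lie groupoids $\widetilde{\mu}^{-1}(1)/\mathbb{S}^1\cong LG\times L\g^*$ in which the reduced target map recovers the gauge action. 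This exhibits $\widetilde{LG}\times L\g^*\to LG\times L\g^*$ as an $\mathbb{S}^1$-central extension of Lie groupoids, and by Proposition \ref{prop: Morita-grpd} it defines an $\mathbb{S}^1$-gerbe over $[LG\times L\g^*]=[G/G]$.

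Next, I would identify the Dixmier-Douady class via the pseudo-connection formalism of Theorem \ref{thm: Behrend-Xu}. Following the template of Proposition \ref{cor: DD-class-omega}, take the pullback of the Liouville $1$-form $\widetilde{j}^*\widetilde{\varphi}^*\theta_L$ on $\widetilde{LG}\times L\g^*$ as the connection $1$-form for the principal $\mathbb{S}^1$-bundle $\widetilde{p}\colon \widetilde{LG}\times L\g^*\to LG\times L\g^*$, paired with the zero $2$-form on $L\g^*$. The loop-version of Proposition \ref{prop:rel_OG&OC} then gives
\[ d(\widetilde{j}^*\widetilde{\varphi}^*\theta_L)=\widetilde{p}^*\omega_{LG\times L\g^*}, \]
so the pseudo-curvature is precisely $\omega_{LG\times L\g^*}$. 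Theorem \ref{thm: Behrend-Xu} then identifies the image of the Dixmier-Douady class in ${\rm H}^3_{\rm dR}([LG\times L\g^*])$ with $[\omega_{LG\times L\g^*}]$. Finally, invoking Theorem \ref{Thm:morita-equivalence}, the Morita-induced isomorphism $f^*\colon {\rm H}^3_{\rm dR}([G/G])\to {\rm H}^3_{\rm dR}([LG\times L\g^*])$ sends $[\omega_D+\Omega]$ to $[\omega_{LG\times L\g^*}]$, so the two classes represent the same element in ${\rm H}^3_{\rm dR}([G/G])$.

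The main potential obstacle, and the one point that needs extra care rather than a direct quotation, is the infinite-dimensionality of $\widetilde{LG}$ and $L\g^*$. Proposition \ref{propn: Ham_space1} and Theorem \ref{thm: Behrend-Xu} are originally formulated in the finite-dimensional setting, so one must justify that the Liouville form, symplectic reduction at a regular value, and the Behrend-Xu pseudo-connection framework extend to the Banach/Fréchet setting consistent with the Sobolev conventions fixed in Section \ref{Sec:loop}, and that $1$ remains a regular value of $\widetilde{\mu}$ in this topology. Once this regularity bookkeeping is granted, the three-step synthesis above is formal and gives the claimed identification of the Dixmier-Douady class.
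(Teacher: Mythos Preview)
Your proposal is correct and follows essentially the same route as the paper: pull back the Liouville $1$-form to $\widetilde{LG}\times L\g^*$ to obtain a pseudo-connection with pseudo-curvature $\omega_{LG\times L\g^*}$ (as in Proposition \ref{cor: DD-class-omega}), invoke Theorem \ref{thm: Behrend-Xu} to identify the de Rham image of the Dixmier--Douady class, and then use Theorem \ref{Thm:morita-equivalence} to match it with $[\omega_D+\Omega]$. Your explicit caveat about the infinite-dimensional setting is a point the paper itself leaves implicit, so you are if anything being more careful than the text; otherwise the arguments coincide.
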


Finally, we explain why the $\mathbb{S}^1$-gerbe constructed here is the basic gerbe. For any compact, simply connected simple Lie group $G$, it is well-known that the degree-3 integral equivariant cohomology group ${\rm H}^3_G(G, \mathbb{Z})$ is canonically isomorphic to $\mathbb{Z}$. Under the inclusion ${\rm H}^3_G(G, \mathbb{Z})\subset {\rm H}^3_G(G)$, the image of the canonical generator of $\mathbb{Z}$ is represented by the equivariant 3-form $\chi_G: \mathfrak{g} \rightarrow \Omega^\bullet (G)$ in the Cartan model, given by $\chi_G(\xi) = \Omega - \frac{1}{2}(\theta + \overline{\theta}, \xi )$. An $\mathbb{S}^1$-gerbe is called basic \cite{Meinrenken}, if its Dixmier-Douady class is represented by $[\chi_G]$. In \cite{Bursztyn2003IntegrationOT}, it is explicitly computed that, under the canonical isomorphism ${\rm H}^3_G(G) \cong {\rm H}_{\rm dR}^3(G\times G)$, the class $[\chi_G]$ is identified with the AMM class. Since the Dixmier-Douady class of the $\mathbb{S}^1$-gerbe from our construction coincides with the AMM class, we conclude that it generates the basic $\mathbb{S}^1$-gerbe.

\vspace{20pt}  

\noindent \textbf{Acknowledgments.}
We are deeply grateful to Professor Ping Xu for suggesting this fascinating
research problem and for many insightful discussions. We thank Zhuo Chen,
Luen-Chau Li, Zhangju Liu, Mathieu Sti\'enon, Marco Zambon, and Bin Zhang for
inspiring discussions and useful feedback. This work was supported by the
Research Institute for Mathematical Sciences, an International Joint Usage/
Research Center located in Kyoto University.

\begin{bibdiv}
\begin{biblist}



\bib{A-M-M}{article}{
	author={Alekseev, A.},
	author={Malkin, A.},
	author={Meinrenken, E.},
	title={Lie group valued moment maps},
	journal={J. Differential Geom.},
	volume={48},
	date={1998},
	number={3},
	pages={445--495},
	issn={0022-040X},
	review={\MR{1638045}},
}

\bib{A-M_24}{article}{
   author={Alekseev, A.},
   author={Meinrenken, E.},
   title={On the coadjoint Virasoro action},
   journal={J. Geom. Phys.},
   volume={195},
   date={2024},
   pages={Paper No. 105029, 32},
   issn={0393-0440},
   review={\MR{4668809}},
}

\bib{A-M_08}{article}{
   author={Alekseev, A.},
   author={Meinrenken, E.},
   title={The Atiyah algebroid of the path fibration over a Lie group},
   journal={Lett. Math. Phys.},
   volume={90},
   date={2009},
   number={1-3},
   pages={23--58},
   issn={0377-9017},
   review={\MR{2565033}},
}


\bib{stacks_B-X}{article}{
	author={Behrend, K.},
	author={Xu, P.},
	title={Differentiable stacks and gerbes},
	journal={J. Symplectic Geom.},
	volume={9},
	date={2011},
	number={3},
	pages={285--341},
	issn={1527-5256},
	review={\MR{2817778}},
}

\bib{B-X-Z}{article}{
	author={Behrend, K.},
	author={Xu, P.},
	author={Zhang, B.},
	title={Equivariant gerbes over compact simple Lie groups},
	language={English, with English and French summaries},
	journal={C. R. Math. Acad. Sci. Paris},
	volume={336},
	date={2003},
	number={3},
	pages={251--256},
	issn={1631-073X},
	review={\MR{1968268}},
}


\bib{Brylinski}{book}{
   author={Brylinski, J. L.},
   title={Loop spaces, characteristic classes and geometric quantization},
   series={Modern Birkh\"{a}user Classics},
   note={Reprint of the 1993 edition},
   publisher={Birkh\"{a}user Boston, Inc., Boston, MA},
   date={2008},
   pages={xvi+300},
   isbn={978-0-8176-4730-8},
   review={\MR{2362847}},
}

\bib{Bunk1}{article}{
   author={Bunk, S.},
   title={Gerbes in geometry, field theory, and quantisation},
   journal={Complex Manifolds},
   volume={8},
   date={2021},
   number={1},
   pages={150--182},
   review={\MR{4271402}},
}

\bib{Bunk2}{article}{
   author={Bunk, S.},
   author={M\"{u}ller, L.},
   author={Szabo, R. J.},
   title={Smooth 2-group extensions and symmetries of bundle gerbes},
   journal={Comm. Math. Phys.},
   volume={384},
   date={2021},
   number={3},
   pages={1829--1911},
   issn={0010-3616},
   review={\MR{4268834}},
}

\bib{Bursztyn2003IntegrationOT}{article}{
	author={Bursztyn, H.},
	author={Crainic, M.},
	author={Weinstein, A.},
	author={Zhu, C.},
	title={Integration of twisted Dirac brackets},
	journal={Duke Math. J.},
	volume={123},
	date={2004},
	number={3},
	pages={549--607},
	issn={0012-7094},
	review={\MR{2068969}},
}

\bib{Carey-Murray-Wang}{article}{
   author={Carey, A. L.},
   author={Murray, M. K.},
   author={Wang, B. L.},
   title={Higher bundle gerbes and cohomology classes in gauge theories},
   journal={J. Geom. Phys.},
   volume={21},
   date={1997},
   number={2},
   pages={183--197},
   issn={0393-0440},
   review={\MR{1427865}},
}

\bib{MR2016690}{article}{
   author={Crainic, M.},
   title={Differentiable and algebroid cohomology, van Est isomorphisms, and
   characteristic classes},
   journal={Comment. Math. Helv.},
   volume={78},
   date={2003},
   number={4},
   pages={681--721},
   issn={0010-2571},
   review={\MR{2016690}},
}


\bib{crainic-fernandes}{article}{
   author={Crainic, M.},
   author={Fernandes, R. L.},
   title={Integrability of Lie brackets},
   journal={Ann. of Math. (2)},
   volume={157},
   date={2003},
   number={2},
   pages={575--620},
   issn={0003-486X},
   review={\MR{1973056}},
}


\bib{crainic2021lectures}{book}{
	author={Crainic, M.},
	author={Fernandes, R.},
	author={M\u{a}rcu\c{t}, I.},
	title={Lectures on Poisson geometry},
	series={Graduate Studies in Mathematics},
	volume={217},
	publisher={American Mathematical Society, Providence, RI},
	date={2021},
	pages={xix+479},
	isbn={978-1-4704-6430-1},
	review={\MR{4328925}},
}

\bib{MR1738431}{book}{
   author={Duistermaat, J. J.},
   author={Kolk, J. A. C.},
   title={Lie groups},
   series={Universitext},
   publisher={Springer-Verlag, Berlin},
   date={2000},
   pages={viii+344},
   isbn={3-540-15293-8},
   review={\MR{1738431}},
}


\bib{Gawedzki-Reis-nonsc}{article}{
   author={Gaw\c{e}dzki, K.},
   author={Reis, N.},
   title={Basic gerbe over non-simply connected compact groups},
   journal={J. Geom. Phys.},
   volume={50},
   date={2004},
   number={1-4},
   pages={28--55},
   issn={0393-0440},
   review={\MR{2078218}},
}

\bib{Gawedzki-Reis}{article}{
   author={Gaw\c{e}dzki, K.},
   author={Reis, N.},
   title={WZW branes and gerbes},
   journal={Rev. Math. Phys.},
   volume={14},
   date={2002},
   number={12},
   pages={1281--1334},
   issn={0129-055X},
   review={\MR{1945806}},
}

\bib{mathieupaper}{article}{
   author={Ginot, G.},
   author={Sti\'{e}non, M.},
   title={$G$-gerbes, principal 2-group bundles and characteristic classes},
   journal={J. Symplectic Geom.},
   volume={13},
   date={2015},
   number={4},
   pages={1001--1047},
   issn={1527-5256},
   review={\MR{3480061}},
}

\bib{Giraud}{book}{
   author={Giraud, J.},
   title={Cohomologie non ab\'{e}lienne},
   language={French},
   series={Die Grundlehren der mathematischen Wissenschaften, Band 179},
   publisher={Springer-Verlag, Berlin-New York},
   date={1971},
   pages={ix+467},
   review={\MR{344253}},
}



\bib{Hit01}{article}{
   author={Hitchin, N.},
   title={Lectures on special Lagrangian submanifolds},
   conference={
      title={Winter School on Mirror Symmetry, Vector Bundles and Lagrangian
      Submanifolds},
      address={Cambridge, MA},
      date={1999},
   },
   book={
      series={AMS/IP Stud. Adv. Math.},
      volume={23},
      publisher={Amer. Math. Soc., Providence, RI},
   },
   isbn={0-8218-2159-8},
   date={2001},
   pages={151--182},
   review={\MR{1876068}},
}

\bib{Hitchin_BundleGerbes}{article}{
	author={Hitchin, N.},
	title={What is a gerbe?},
	journal={Notices of the AMS},
	volume={50},
	date={2003},
	number={2},
	pages={218--219},
}


\bib{Krepski}{article}{
   author={Krepski, D.},
   title={Groupoid equivariant prequantization},
   journal={Comm. Math. Phys.},
   volume={360},
   date={2018},
   number={1},
   pages={169--195},
   issn={0010-3616},
   review={\MR{3795190}},
}

\bib{MR2685337}{book}{
   author={Lu, J-H.},
   title={Multiplicative and affine Poisson structures on Lie groups},
   note={Thesis (Ph.D.)--University of California, Berkeley},
   publisher={ProQuest LLC, Ann Arbor, MI},
   date={1990},
   pages={74},
   review={\MR{2685337}},
}

\bib{MR2045679}{article}{
   author={Lupercio, E.},
   author={Uribe, B.},
   title={Gerbes over orbifolds and twisted $K$-theory},
   journal={Comm. Math. Phys.},
   volume={245},
   date={2004},
   number={3},
   pages={449--489},
   issn={0010-3616},
   review={\MR{2045679}},
}



\bib{mackenzie2005general}{book}{
	author={Mackenzie, K.},
	title={General theory of Lie groupoids and Lie algebroids},
	series={London Mathematical Society Lecture Note Series},
	volume={213},
	publisher={Cambridge University Press, Cambridge},
	date={2005},
	pages={xxxviii+501},
	isbn={978-0-521-49928-3},
	isbn={0-521-49928-3},
	review={\MR{2157566}},
}


\bib{marsdenbook}{book}{
	author={Marsden, J.},
	author={Misio\l ek, G.},
	author={Ortega, J.},
	author={Perlmutter, M.},
	author={Ratiu, T.},
	title={Hamiltonian reduction by stages},
	series={Lecture Notes in Mathematics},
	volume={1913},
	publisher={Springer, Berlin},
	date={2007},
	pages={xvi+519},
	isbn={978-3-540-72469-8},
	review={\MR{2337886}},
}

\bib{Meinrenken}{article}{
   author={Meinrenken, E.},
   title={The basic gerbe over a compact simple Lie group},
   journal={Enseign. Math. (2)},
   volume={49},
   date={2003},
   number={3-4},
   pages={307--333},
   issn={0013-8584},
   review={\MR{2026898}},
}

\bib{meinrenken1996symplecticproof}{article}{
   author={Meinrenken, E.},
   author={Woodward, C.},
   title={Hamiltonian loop group actions and Verlinde factorization},
   journal={J. Differential Geom.},
   volume={50},
   date={1998},
   number={3},
   pages={417--469},
   issn={0022-040X},
   review={\MR{1690736}},
}

\bib{Murray_BundleGerbes}{article}{
   author={Murray, M. K.},
   title={Bundle gerbes},
   journal={J. London Math. Soc. (2)},
   volume={54},
   date={1996},
   number={2},
   pages={403--416},
   issn={0024-6107},
   review={\MR{1405064}},
}

\bib{Neeb}{article}{
	author={Neeb, K-H.},
	title={A note on central extensions of Lie groups},
	journal={J. Lie Theory},
	volume={6},
	date={1996},
	number={2},
	pages={207--213},
	issn={0949-5932},
	review={\MR{1424633}},
}

\bib{P-Segal}{book}{
	author={Pressley, A.},
	author={Segal, G.},
	title={Loop groups},
	series={Oxford Mathematical Monographs},
	note={Oxford Science Publications},
	publisher={The Clarendon Press, Oxford University Press, New York},
	date={1986},
	pages={viii+318},
	isbn={0-19-853535-X},
	review={\MR{900587}},
}


\bib{Weinstein1987SymplecticGA}{article}{
	author={Weinstein, A.},
	title={Symplectic groupoids and Poisson manifolds},
	journal={Bull. Amer. Math. Soc. (N.S.)},
	volume={16},
	date={1987},
	number={1},
	pages={101--104},
	issn={0273-0979},
	review={\MR{866024}},
}

\bib{Xu}{article}{
	author={Xu, P.},
	title={Momentum maps and Morita equivalence},
	journal={J. Differential Geom.},
	volume={67},
	date={2004},
	number={2},
	pages={289--333},
	issn={0022-040X},
	review={\MR{2153080}},
}


\end{biblist}
\end{bibdiv}

\end{document}